\documentclass{article}

\usepackage{arxiv}

\usepackage[utf8]{inputenc}
\usepackage[T1]{fontenc}
\usepackage[numbers,sort&compress]{natbib}
\usepackage{url}
\usepackage{booktabs}
\usepackage{amsfonts}
\usepackage{amssymb}
\usepackage{nicefrac}
\usepackage{microtype}
\usepackage{graphicx}
\usepackage{amsthm}
\usepackage{amsmath}
\usepackage{hyperref}

\numberwithin{equation}{section}

\theoremstyle{plain}
\newtheorem{theorem}{Theorem}[section]

\newtheorem{lemma}[theorem]{Lemma}

\newtheorem{remark}[theorem]{Remark}

\newtheorem{problem}[theorem]{Problem}

\theoremstyle{remark}

\newtheorem{definition}[theorem]{Definition}

\def\R{\mathbb{R}}

\def\N{\mathbb{N}}
\def\G{\mathbb{G}}
\def\T{\mathbb{T}}

\DeclareMathOperator{\Ex}{\mathbb{E}}
\renewcommand{\Pr}{\mathbb{P}}
\newcommand{\Pm}{P}
\newcommand{\Pmhat}{\widehat{P}}
\newcommand{\Tmhat}{\widehat{T}}

\newcommand{\muhat}{\widehat{\mu}}

\def\sF{\mathcal{F}}
\def\sG{\mathcal{G}}\def\sH{\mathcal{H}}\def\sI{\mathcal{I}}

\def\sN{\mathcal{N}}

\def\bX{{\bf X}}

\def\eps{\varepsilon}
\def\Ind{{\bf 1}}

\DeclareMathOperator*{\argmin}{arg\,min}

\title{Robust high-dimensional Gaussian and bootstrap approximations for trimmed sample means}

\author{ \href{https://orcid.org/0000-0001-6188-299X}{\includegraphics[scale=0.06]{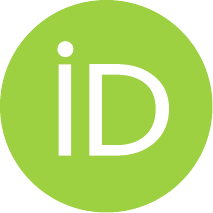}\hspace{1mm}Lucas Resende}\\
CREST, ENSAE, Institut Polytechnique de Paris\\
Palaiseau, France \\
\texttt{lucas.resende@ensae.fr} \\
}

\hypersetup{
pdftitle={Robust high-dimensional Gaussian and bootstrap approximations for trimmed sample means},
pdfsubject={stat.ME, math.ST},
pdfauthor={Lucas Resende},
pdfkeywords={Gaussian approximation, bootstrap, high dimensionality, robustness},
}

\begin{document}
\maketitle

\begin{abstract}
Robust mean estimation has largely focused on concentration guarantees under heavy tails and contamination. We study robustness from a different perspective: high-dimensional Gaussian and bootstrap approximations. We show that trimmed sample means admit Gaussian and bootstrap approximations under finite (p)-th moment assumptions, even in high-dimensional regimes and in the presence of adversarial contamination. Our bounds recover, up to the dependence on the moment parameter, the rates available for the empirical mean under light tails, while requiring substantially weaker moment assumptions. We further extend the Gaussian approximation to VC-subgraph classes and apply it to robust vector mean estimation under arbitrary norms, obtaining bounds with optimal Gaussian-width complexity. Finally, we develop uniform confidence intervals based on the bootstrap approximation and show empirically that they maintain coverage under heavy tails and adversarial contamination.
\end{abstract}

\keywords{Gaussian approximation \and bootstrap \and high dimensionality \and robustness}

\section{Introduction}









Let $X, X_1, X_2, \dots, X_n$ be i.i.d. random variables taking values in some set $\bX$ according to a distribution $P$. Also let $\sF$ be a family of functions $f: \bX\to\R$ and denote by $Pf$ the expectation of $f(X)$. Uniformly estimating the population mean $Pf$ for all $f \in \sF$ is a fundamental task in statistics; when $\sF$ has a single element it is the classical problem of mean estimation, when $\sF$ is finite it is related to the problem of vector mean estimation under the $\| \cdot \|_\infty$-norm, when $\sF$ is infinite this problem is related to vector mean estimation under general norms, maximum likelihood estimation, generalization bounds, and others.

Since Catoni's seminal paper \cite{Catoni2012}, a line of research has focused on designing mean estimators attaining, with few assumptions on $P$ and sometimes also considering contaminated samples, high probability error bounds similar to those the empirical mean attains on sub-Gaussian samples. Such estimators are called sub-Gaussian mean estimators, see \cite{Devroye2016, minsker2018uniform, Lugosi2019a, Lugosi2019, Lugosi2021}. For instance, taking $\sF = \{f\}$ and assuming only $\sigma^2 := \Ex (f(X)-Pf)^2 < \infty$, it is possible to construct a family of estimators $\left\{\muhat_{n,\alpha} : \R^n \to \R \right\}_{\alpha \in (0,1)}$ satisfying, for some absolute constant $C>0$,
\begin{equation}
    \label{eq:highprobability}
    \Pr\left\{ \left| \muhat_{n,\alpha}(X_1, \dots, X_n) - Pf \right| \leq C \sqrt{\frac{\sigma^2}{n}\ln \frac{1}{\alpha}}\right\} \geq 1 - \alpha.
\end{equation}

This article looks at robustness in terms of another desirable property of the empirical mean under light-tailed distributions: its high-dimensional Gaussian and bootstrap approximations. We study high-dimensional Gaussian and bootstrap approximations for the trimmed mean, a mean estimator that is known to achieve optimal performance in several problems \cite{OliveiraResende}. Let $k < \frac{n}{2}$ be a non-negative integer, the trimmed mean estimate of $Pf$ given $x_{1:n} = (x_1, \dots, x_n) \in \bX^n$ is simply
\[ T_{n,k}(f, x_{1:n}) = \frac{1}{n-2k} \sum_{i=k+1}^{n-k} f\left(x_{(i)}\right) \]
where $(\cdot)$ is a permutation such that $f\left(x_{(1)}\right)\leq f\left(x_{(2)}\right)\leq \cdots \leq f\left(x_{(n)}\right)$. Following the recent work on high-dimensional Gaussian approximation for the empirical mean  \cite{chernozhukov2013gaussian, chernozhukov2016empirical, koike2021notes, chernozhuokov2022improved, giessing2023gaussian}, we let $\{G_P(f)\}_{f \in \sF}$ be a centered Gaussian process with the same covariance as $\left\{f(X)\right\}_{f \in \sF}$ and look at the Kolmogorov distance between the maximum of the Gaussian process and the maximum of the finite-sample approximation. We show that the trimmed mean satisfies the Gaussian approximation property  
\begin{equation}
\label{eq:main_contribution}
    \sup_{\lambda \in \R} \left| \Pr\left[ \sup_{f \in \sF} \sqrt{n}\, \left(T_{n,k}(f, X_{1:n}) - Pf\right) \leq \lambda \right] -  \Pr\left[ \sup_{f \in \sF} G_Pf \leq \lambda \right] \right| \to 0
\end{equation}
for contaminated heavy-tailed data even in the high-dimensional setting where $|\sF| \gg n$. This line of research differs from classical Berry-Esseen results \cite{nagaev1976estimate,bentkus2003dependence, bentkus2005lyapunov, prokhorov2000limit, klartag2012variations} since our main concern is allowing for $|\sF| \gg n$, usually paying the price of having a slower vanishing rate and obtaining an approximation bound only over the maximum and not over all convex sets, as in the low-dimensional case.

As discussed in \cite{chernozhukov2023high, belloni2018high}, Gaussian and bootstrap approximations for the maximum appear in applications such as constructing simultaneous confidence sets, multiple hypothesis testing, and treatment heterogeneity analysis. High-dimensional settings frequently arise in fields like biology and econometrics, mainly due to sample size limitations or the presence of parameters controlling for heterogeneity. However, as shown by \cite{kock2024remark}, the empirical average fails to attain Gaussian approximation properties for heavy-tailed distributions when the dimension is large (see \S\ref{subsec:limitations} for details). 

Motivated by this limitation, the main contribution of this paper is establishing \eqref{eq:main_contribution} for contaminated heavy-tailed data in the regime $|\sF| \gg n$. We explore this result through two applications:
\begin{itemize}
    \item \textbf{Vector mean estimation under arbitrary norms:} We extend our Gaussian approximation results from finite-dimensional settings to VC-subgraph classes. Using this extension in place of classical empirical process tools, we achieve optimal complexity bounds with a Gaussian width term.
    \item \textbf{Uniform confidence intervals:} Building on our bootstrap approximation results, we propose a method for constructing uniform confidence intervals. We provide experimental evidence that these intervals, when obtained via trimmed means, remain valid under heavy tails and adversarial contamination.
\end{itemize}

This paper is organized as follows. In \S\ref{subsec:assumptions} we introduce the main definitions and notations. In \S\ref{subsec:literature} we provide a short literature overview on high-dimensional Gaussian and bootstrap approximation. Our Gaussian approximation results are presented in \S\ref{sec:mainresult}. In \S\ref{sec:vectormean} we apply our results to the problem of vector mean estimation under general norms. We provide bootstrap approximations and their application to uniform confidence intervals in \S\ref{sec:bootstrap}. The proofs of all results from \S\ref{sec:mainresult} are presented in \S\ref{sec:proofideas}. All other proofs are in the supplementary material. We observe that our proof techniques can be easily combined with new, improved results for Gaussian approximation of the empirical average, potentially resulting in tighter bounds than those presented here.

\begin{remark}[Concurrent and subsequent work \cite{liu2024robust, kock2025high}] 
This article is based on work originally presented in the author's PhD thesis \cite{phd}. A few months after the thesis was submitted, \cite{liu2024robust} posted an independent preprint exploring Gaussian and bootstrap approximations for a robust estimator that combines the median-of-means with a winsorization procedure. Furthermore, during the review process of this manuscript, \cite{kock2025high} released a preprint that improved upon some of the bounds established in our initial draft. Prompted by their findings, a careful review of the original manuscript's proofs revealed a suboptimal bounding step. By resolving this lack of sharpness, the bounds in the present article have been improved to match those derived in \cite{kock2025high}. The new solution also motivated an improvement of Theorem \ref{thm:ga_vc} and on Theorem \ref{thm:bootstrap_approximation}. The author is grateful to M. E. Lopes for bringing his concurrent work to the author's attention, and to A. B. Kock for stimulating discussions, particularly regarding his interest in Lemma \ref{lemma:epsbound}, which was added subsequent to the original version of this paper.\end{remark}

\subsection{Definitions and notations}\label{subsec:assumptions}


Recall that $\bX$ is a set, $\sF$ is a family of functions $f : \bX \to \R$, $P$ is a distribution over $\bX$ and $X, X_1, \dots, X_n$ are i.i.d. random variables with law $P$. We denote $\Ex f(X)$ by $Pf$, use $|\cdot|$ to denote the cardinality of a set and let $[n] := \{ 1, 2, \dots, n \}$ for every positive integer $n$. Given a family $\sF$ of square-integrable functions we define
\[ \Sigma_{\sF, P}(f,g) := \mbox{Cov}(f,g), \,\, f,g \in \sF, \]
which is simply the covariance matrix of $\sF$ when $|\sF| < \infty$ or the kernel of $\sF$ when $|\sF| = \infty$. We let $\{G_P(f): f \in \sF\}$ be a centered Gaussian process such that
\[ \Ex \left[G_P(f)G_P(g)\right] = \Sigma_{\sF, P}(f,g)\,\, \forall f,g \in \sF. \]
See \cite{Vaart1996} for details on the existence of such processes.

\begin{definition}[Adversarially contaminated sample]\label{def:contamination} Let $\varepsilon \in [0,1]$, $X_{1:n}^\varepsilon = (X_1^\varepsilon, \dots, X_n^\varepsilon)$ is an $\varepsilon$-contaminated version of $X_{1:n}$ if
\[ |\{ i \in [n] : X_i^\varepsilon \neq X_i \}| \leq \varepsilon n. \]
In this setup, one has access only to the contaminated sample $X_{1:n}^\varepsilon$, which differs from the clean sample in at most $\eps n$ points. This contamination setup is said to be adversarial because no assumption is made on how the contamination procedure works. It has been widely used on the recent literature (see \textit{e.g.} \cite{Depersin2021, Lecue2020, Diakonikolas2019, diakonikolas2019a, lecue2019learning}) and is more adverse than Huber's classical contamination model \cite{huber1965robust, huber1981robust}, where the distribution $P$ is assumed to be known and the contaminated points are drawn from an unknown distribution. This setup is closely related to the classical notion of breakdown point \cite{huber1981robust}: an estimator that is robust to an $\eps$-contamination has a breakdown point of at least $\eps$. For further discussion and comparisons with other contamination procedures see \cite{diakonikolas2023algorithmic}.
\end{definition}

This model is realistic in a range of modern data-analysis settings where a small fraction of observations may be corrupted by mechanisms that are difficult to model parametrically. In high-throughput biology, sample mislabeling and contamination are well documented \cite{zych2017regenotyper}. Similar sparse-corruption settings arise in sensor networks, where a fraction of sensors may be faulty or compromised \cite{fagnani2014distributed}, and in crowdsourced or web-scale datasets, where a few percent of labels may be erroneous \cite{northcutt2021pervasive}. In these settings, the contaminating mechanism is unknown and need not follow a specified outlier distribution, motivating an adversarial rather than a Huber contamination model. The requirement in our results that $\epsilon$ vanish with $n$ (see \S\ref{sec:mainresult}) reflects the fact that distributional (Gaussian) approximation is a strictly harder goal than point estimation, and Lemma \ref{lemma:epsbound} shows that this requirement is unavoidable.

Given $p > 2$ we let
\[ \nu_p(\sF, P) := \sup_{f \in \sF} \left(P|f-Pf|^p\right)^\frac{1}{p} \text{ and } \underline{\sigma}^2_{\sF, P} := \inf_{f \in \sF}\Sigma_{\sF, P}(f,f). \]
We may drop the dependence on $\sF$ and $P$ when it is implicit and simply write $\Sigma$, $\nu_p$, and $\underline{\sigma}^2$.

Finally, we let $\Tmhat_{n,k}^\eps(f) := T_{n,k}\left(f, X_{1:n}^\eps\right)$ and let $\Pmhat_n(f)$ denote the empirical mean of $f$. We define the normalized empirical processes corresponding to the empirical mean and the trimmed mean as
\[ \G_n(f) := \sqrt{n} \left( \Pmhat_n(f) -Pf \right) \quad \text{ and } \quad \T_{n,k}^\eps(f) := \sqrt{n}\left(\Tmhat_{n,k}^\eps(f) - Pf\right), \]
respectively. Letting the suprema of these processes over the class $\sF$ be given by
\[ Z_n(\sF) := \sup_{f \in \sF}\, \G_n(f), \quad Z_{n,k}^\eps(\sF) := \sup_{f \in \sF}\, \T_{n,k}^\eps(f), \quad \text{ and } \quad Z(\sF) := \sup_{f \in \sF}\, G_P(f), \]
we define the Kolmogorov-Smirnov distances between the distributions of these suprema and $G_P$ as
\[
    \varrho^E := \sup_{\lambda \in \R } \left| \Pr\left( Z_n(\sF) \leq \lambda \right) - \Pr\left( Z(\sF) \leq \lambda \right) \right| \quad\text{ and }\quad
    \varrho := \sup_{\lambda \in \R } \left| \Pr\left( Z_{n,k}^\eps(\sF) \leq \lambda \right) - \Pr\left( Z(\sF) \leq \lambda \right) \right|.
\]


\subsection{Literature overview}
\label{subsec:literature}

\subsubsection{Gaussian and bootstrap approximations for the empirical mean}

Under strong moment assumptions, the pioneering work of \cite{chernozhukov2013gaussian} first bounded $\varrho^E \leq C \left( \frac{\ln^7(nd)}{n} \right)^\frac{1}{6}$.
This bound was then improved by \cite{koike2021notes} and \cite{chernozhuokov2022improved} to 
$\varrho^E \leq C \left( \frac{\ln^5(nd)}{n} \right)^\frac{1}{4}$.
Under additional assumptions, such as $\Sigma_{\sF, P}$ being positive definite, $\varrho^E$ can be further bounded, as discussed in Remark \ref{remark:strongcov}. We say that these are high-dimensional Gaussian approximation bounds because they ensure $\varrho^E \to 0$ even when the dimension $d$ is exponential on a polynomial of $n$. This behavior contrasts with Berry-Esseen-type bounds \cite{nagaev1976estimate,bentkus2003dependence, bentkus2005lyapunov, prokhorov2000limit, klartag2012variations} since they only allow for a polynomial dependence between $d$ and $n$. On the other hand, Berry-Esseen-type bounds provide an uniform bound over all convex sets and not only for the maximum. There is also literature on high-dimensional Gaussian approximation for the empirical average in the case $d=\infty$. Assuming the existence of an envelope function and some moment conditions, a line of work started by \cite{Chernozhukov2014, chernozhukov2016empirical} derived bounds depending on the metric entropy of the class $\sF$. More recently, dimension and entropy free bounds were obtained by \cite{giessing2023gaussian}, although also requiring an envelope function with finite third moment.

All the previously cited works also consider bootstrap approximation results. The most popular bootstrap strategies are the empirical and the multiplier bootstrap. The usual approach is to prove the validity of bootstrap by showing that both the mean and the bootstrap statistics satisfy a Gaussian approximation result with the same limit. Some work has also obtained bootstrap approximations directly: \cite{deng2020beyond} managed to improve upon the then-current state of the art using a direct bootstrap approximation. Moreover, \cite{lopes2020bootstrapping} obtained a bootstrap approximation rate of $n^{-\frac{1}{2}}$ independent of the dimension, but under sub-exponential tails and variance decay.

\subsubsection{The limitations of the empirical mean}\label{subsec:limitations}

All the previously cited works on high-dimensional Gaussian and bootstrap approximations require strong moment assumptions to ensure $\varrho^E \to 0$ when $d$ is exponential on a polynomial of $n$. Considering the case $|\sF|<\infty$, the current state-of-the-art Gaussian approximation bound \cite[Theorems 2.1 and 2.5]{chernozhuokov2022improved} requires the existence of $B$ such that $\nu_4^4 \leq B^2 \nu_2^2$ for all $f \in \sF$, $\underline{\sigma}^2 > 0$ and at least one of the following two conditions: (i) $\Ex e^\frac{|f(X)-Pf|}{B} \leq 2$ for all $f \in \sF$ or (ii) $\Ex \max_f |f(X)-Pf|^p \leq B^p$ for some $p>2$. Moreover, under condition (ii) they obtain
\begin{equation}
    \label{eq:sotaempirical}
    \varrho^E \leq C\left( B\frac{(\ln d)^{\frac{3}{2} - \frac{1}{p}}}{n^{\frac{1}{2} - \frac{1}{p}}} + \left( 
\frac{B^2 \ln^5d}{n} \right)^\frac{1}{4} \right)
\end{equation}
for some constant $C$ depending only on $\underline{\sigma}^2$ and $\nu_2$. Similarly, for $|\sF|=\infty$, \cite[Theorem 2.1]{chernozhukov2016empirical} requires the existence of a envelope function $F : \bX \to \R$ satisfying $F(x) \geq |f(x)| ~ \forall x \in \bX, f\in \sF$ and $PF^4 < \infty$. Meanwhile, \cite[Theorem 6]{giessing2023gaussian} requires $PF^3 < \infty$ and provides a bound that depends on $PF^3$. The following theorem justifies the need of such strong moment assumptions when studying Gaussian approximations for the empirical mean:

\begin{theorem}[Threshold phenomenon for the Gaussian approximation for the empirical average; adapted from Theorems 2.1 and 2.2 of \cite{kock2024remark}] \label{thm:threshold} Let $\sF$ be the family of all $d$ coordinate projections of $\R^d$ and let $p \in (2,\infty)$. 
\begin{itemize}
    \item There exists a centered distribution $P$ with i.i.d. marginals, finite $p$-th moment, but no higher moments, and such that if $\delta > 0$ satisfies $\limsup_{n\to\infty} dn^{1-\frac{p}{2} - \delta} > 0$ and $X_{1:n} \sim P^n$, then
\[ \limsup_{n\to\infty} \sup_{\lambda \in \R} \left| \Pr\left(Z_n(\sF) \leq \lambda\right) - \Pr\left(Z(\sF) \leq \lambda\right) \right| = 1. \]
    \item Moreover, given $0 < c \leq C^\frac{2}{p} < \infty$, let $\mathcal{P}(c, C, p)$ be the class of all centered distributions $P$ over $\R^d$ with $\underline{\sigma}_{\sF, P} \geq c$ and $\nu_p^p(\sF, P) \leq C$. If $\delta > 0$ satisfies $\lim_{n\to\infty} dn^{1-\frac{p}{2} + \delta} = 0$, then
    \[ \lim_{n\to\infty} \sup_{\lambda \in \R} \sup_{P \in \mathcal{P}(c,C,p)} \left| \Pr\left(Z_n(\sF) \leq \lambda\right) - \Pr\left(Z(\sF) \leq \lambda\right) \right| = 0 \]
\end{itemize}
\end{theorem}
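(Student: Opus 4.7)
The theorem has a lower-bound and an upper-bound part, and the two call for very different techniques. For the first bullet I would exhibit a single extremal distribution whose tails sit exactly at the $p$-th moment boundary and show that a single very large observation destroys the Gaussian approximation. For the second bullet I would truncate the data just above the threshold $(nd)^{1/p}$, where the sample maximum lives with high probability, and then apply a high-dimensional CLT to the bounded truncated process while absorbing the truncation error by a Gaussian comparison argument.

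For the lower bound, take $P$ to be the product of $d$ independent centered copies of a Pareto-type law satisfying $\Pr(|X|>t) \sim t^{-p}$ as $t\to\infty$, which has a finite $p$-th moment but no moment of higher order. Classical extreme-value estimates give $M_n := \max_{i\leq n,\, j\leq d}|X_{ij}|$ of order $(nd)^{1/p}$ with probability bounded away from $0$. A single such extreme entry contributes an amount of order $M_n/\sqrt{n}$ to the corresponding coordinate of $\sqrt{n}(\Pmhat_n - P)$, hence $Z_n(\sF) \geq c M_n/\sqrt{n}$ on that event for an absolute constant $c>0$. Under $\limsup_n d n^{1-p/2-\delta}>0$, this lower bound is of order $n^{\delta/p}$ along a subsequence, whereas $Z(\sF)$ is the maximum of $d$ centered Gaussians, concentrated within an $O(1)$ window around a median of order $\sqrt{\ln d}=O(\sqrt{\ln n})$. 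Choosing $\lambda$ inside that Gaussian window yields $\Pr(Z_n(\sF)\leq\lambda)\to 0$ while $\Pr(Z(\sF)\leq\lambda)$ stays in a nontrivial interval, so the Kolmogorov distance tends to $1$ along the subsequence.

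For the upper bound, fix $\delta>0$ and set $\tau_n = n^{1/2-\delta/p+\eta}$ for a small $\eta>0$. On the event $A_n := \{\max_{i,j}|X_{ij}|\leq\tau_n\}$, which has probability $1-o(1)$ uniformly over $\mathcal{P}(c,C,p)$ by a union bound exploiting $\nu_p^p\leq C$, the process $\G_n$ agrees with its truncated counterpart up to a deterministic mean shift of order $\sqrt{n}\,\nu_p^p \tau_n^{1-p}=o(1)$ and a covariance perturbation of the same order, which is absorbed by a Gaussian comparison lemma. I would then apply a high-dimensional CLT to the bounded truncated variables (which automatically satisfy $\nu_4^4\leq \tau_n^2 \nu_2^2$) and verify that the resulting rate is $o(1)$ uniformly over $\mathcal{P}(c,C,p)$.

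The main obstacle is precisely this last step. Plugging $B=\tau_n$ directly into \eqref{eq:sotaempirical}, the first term $B(\ln d)^{3/2-1/p}/n^{1/2-1/p}$ is of order $n^{(1-\delta)/p}$ up to logarithmic factors, which does not vanish at the threshold — it tends to $0$ only when $\delta>1$. Reaching the sharp threshold $\delta>0$ therefore requires a Lindeberg exchange run directly against Pareto-tailed variables, extracting a gain of order $(\tau_n/\sqrt{n})^{p-2}$ rather than the cubic gain implicit in \eqref{eq:sotaempirical}. This is the technical heart of \cite{kock2024remark}, which I would invoke rather than reproduce in full.
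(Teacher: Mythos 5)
The paper does not prove this theorem: it is imported by citation as an adaptation of Theorems 2.1 and 2.2 of \cite{kock2024remark}, so there is no internal proof to compare against. Judged on its own terms, your sketch captures the right mechanism — a single heavy-tailed entry of size $\asymp (nd)^{1/p}$ dominating $Z_n(\sF)$ above the threshold, and truncation combined with a bounded-variable CLT below it — but it has two concrete gaps. First, a distribution with $\Pr(|X|>t) \sim t^{-p}$ has \emph{infinite} $p$-th moment, since $\Ex |X|^p = p\int t^{p-1}\Pr(|X|>t)\,dt \asymp \int t^{-1}\,dt$ diverges; to obtain a finite $p$-th moment with no higher moments one must insert a slowly varying correction, e.g. $\Pr(|X|>t)\asymp t^{-p}(\ln t)^{-2}$. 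Second, your argument as written only produces a Kolmogorov distance bounded away from zero: if $M_n \gtrsim (nd)^{1/p}$ holds merely with probability bounded away from zero, then $\Pr(Z_n(\sF)\leq\lambda)$ need not tend to $0$, and if $\lambda$ sits ``inside the Gaussian window'' then $\Pr(Z(\sF)\leq\lambda)$ is bounded away from $1$, so the difference caps well below $1$. To get the stated $\limsup = 1$ you must show $M_n/\sqrt n \to \infty$ \emph{in probability} — which does hold along the subsequence, since $M_n/(nd)^{1/p}$ is tight and bounded away from zero in distribution while $(nd)^{1/p}/\sqrt n = n^{\delta'/p}\to\infty$ — and then take a sequence $\lambda_n\to\infty$ slowly enough that $\Pr(Z(\sF)\leq\lambda_n)\to 1$ while $\Pr(Z_n(\sF)\leq\lambda_n)\to 0$. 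You correctly observe that plugging a truncation level into \eqref{eq:sotaempirical} cannot reach the sharp threshold $\delta>0$, and that the genuine content of the upper bound is the Lindeberg-type argument of \cite{kock2024remark}; since the paper itself defers to that reference, deferring is the appropriate move, but the lower-bound sketch should be tightened as above before it counts as a proof.
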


Thus, the Gaussian approximation for the empirical average has a phase transition at $d = n^{\frac{p}{2}-1}$, being feasible for a large class of distributions when $d \ll n^{\frac{p}{2}-1-\delta}$, but not when $d \gg n^{\frac{p}{2}-1+\delta}$. Notice that \eqref{eq:sotaempirical} matches Theorem \ref{thm:threshold} since, in general, the inequality  $\Ex \max_f |f(X)|^p \leq d \nu_p^p$ cannot be improved\footnote{For instance, take functions $f \in \sF$ with disjoint support and the same $p$-th moment.} and so $B$ must be of order $d^\frac{1}{p}\nu_p$, yielding $\varrho^E \to 0$ when $d \ll n^{\frac{p}{2}-1}$.

\subsubsection{Gaussian and bootstrap approximations for robust estimators}

Gaussian and bootstrap approximations for robust estimators were studied by \cite{liu2024robust} and \cite{kock2025high} for the finite-dimensional (\textit{i.e.}, $|\sF| < \infty$). In \cite{liu2024robust}, assuming the uncontaminated setting ($\eps = 0$), the authors combine a median-of-means approach with a variance-based Winsorization procedure to establish high-dimensional Gaussian and bootstrap approximation guarantees. Their estimator uses a small subset of the sample to construct robust mean and variance estimates, $\widehat{\mu}_f$ and $\widehat{\sigma}_f^2$, for all $f \in \sF$. These estimates are then used to construct a Winsorized mean, normalized by $\sqrt{n}\widehat{\sigma}_f^\tau$ for a tuning parameter $\tau \in (0,1)$.

Under an $L^{4+\delta}-L^2$ moment equivalence condition, bounded densities, and a variance decay assumption, the authors achieve a nearly optimal Gaussian approximation rate of $n^{-1/2 + \delta}$. Their result relies heavily on the variance decay condition, which requires the existence of constants $C \ge 1$ and $\beta > 0$ such that
\[
    \frac{1}{C}\sigma_{(1)}^2 j^{-2\beta} \leq \sigma_{(j)}^2 \leq C\sigma_{(1)}^2 j^{-2\beta}
\]
for the sorted variances $\sigma_{(1)}^2 \ge \dots \ge \sigma_{(d)}^2$ associated with the coordinates $f_1, \dots, f_d \in \sF$. This variance decay profile is crucial for proving that there exists, with high probability, a low-dimensional subset $\sH \subset \sF$ that contains the index realizing the maximum statistic. Notably, the choice of $\tau \in (0,1)$ is of particular importance to their results; if $\tau$ were set to $1$, the normalization would place all coordinates on an equal scale, rendering the subset $\sH$ impossible to construct.

Building on the results from the original manuscript of this paper, \cite{kock2025high} addressed Gaussian and bootstrap approximations for both a quantile-based Winsorized and the trimmed means, which they show to satisfy Gaussian approximation properties as long as $\ln |\sF| \ll n^{\frac{1}{5} + O\left( p^{-1} \right)}$. They do not cover the case $|\sF| = \infty$ nor the problem of vector mean estimation under general norms. Their proof strategy consists in approximating the Winsorized mean given by empirical quantiles by the one given by populational quantiles, controlling the error, and using the Gaussian approximation result from \cite{chernozhuokov2022improved}. Meanwhile, inspired by \cite{OliveiraResende}, we approximate $\widehat{T}_{n,k}^\eps(f)$ by $\Pmhat_n( \tau_M\circ f )$, where $\tau_M(x) = x\mathbf{1}_{|x| \leq M} + M\mathbf{1}_{x > M}  - M\mathbf{1}_{x < -M}$ and control $M$, without requiring a fine control over the quantiles. 

\section{High-dimensional Gaussian approximations}\label{sec:mainresult}

Recall the definitions from \S\ref{subsec:assumptions}. In this section we let $|\sF| = d < \infty$. Our main result follows:

\begin{theorem}[High-dimensional Gaussian approximation for trimmed means, proof in \S\ref{subsec:proof_gaussian}]\label{thm:gaussian_approximation} Assume an $\eps$-contaminated sample $X_{i:n}^\eps$ as in \S\ref{subsec:assumptions}. Let $nd \geq 3$ and $\nu_p := \nu_p(\sF, P) < \infty$ for some $p \in (2,\infty)$. If $k := \left\lfloor \eps n \right\rfloor + \left( \left\lfloor \eps n \right\rfloor \vee \left\lceil \ln(nd) \right\rceil \right) < \frac{n}{2}$ and the regularity condition
\begin{equation}\label{eq:condition_b1}
    \nu_p^2 \left(\frac{k}{n}\right)^{1-\frac{2}{p}} \leq \frac{\underline{\sigma}^2_{\sF, P}}{8}
\end{equation}
holds, then
\[\varrho \leq c_1 \sqrt{\nu_p} \left( \frac{\ln^{5-\frac{2}{p}}(nd)}{n^{1-\frac{2}{p}}} \right)^\frac{1}{4} + c_2 \nu_p \left( \eps^{1-\frac{1}{p}}\sqrt{n \ln(nd) } +  \left( \frac{\ln^{3-\frac{2}{p}}(nd)}{n^{1-\frac{2}{p}}} \right)^\frac{1}{2} \right) + \frac{1}{n} \]
where $c_1 > 0$ depends on $\nu_2(\sF, P)$ and $\underline{\sigma}_{\sF, P}$, and $c_2 \geq \underline{\sigma}_{\sF, P}^{-1}$ depends only on $\underline{\sigma}_{\sF, P}$.
\end{theorem}

Theorem \ref{thm:gaussian_approximation} can be compared with the best-known Gaussian approximation bound available for the empirical mean, given by \eqref{eq:sotaempirical}. For a baseline comparison, we first set $\eps = 0$, as the standard empirical mean cannot handle adversarial contamination. To ensure $\varrho^E \to 0$ in \eqref{eq:sotaempirical}, one requires light-tailed distributions and a dimension growth restricted to $\ln(d) \ll n^\frac{1}{5}$. In contrast, to obtain $\varrho \to 0$ in Theorem \ref{thm:gaussian_approximation}, it suffices to have $\nu_p < \infty$ for some $p>2$ and $\ln(d) \ll n^{\frac{1}{5} - O\left( p^{-1} \right)}$ (indeed, the binding term forces $\ln d \ll n^{(1-2/p)/(5-2/p)}$, whose exponent equals $\frac{1}{5} - \frac{8}{25p} + o(p^{-1})$ and increases to $\frac{1}{5}$ as $p\to\infty$). While our allowable dimension growth is slightly more restrictive than the $n^{1/5}$ rate available for the empirical mean, our result holds under significantly weaker moment assumptions and remains valid under data contamination.

Furthermore, Theorem \ref{thm:gaussian_approximation} operates within a vanishing contamination framework. The bound on $\varrho$ vanishes only if the contamination level $\eps$ satisfies
\begin{equation}
\label{eq:eps_bound}
\frac{\nu_p}{\underline{\sigma}_{\sF, P}} \eps^{1-\frac{1}{p}} \sqrt{n\ln(nd)} \to 0.
\end{equation}
This imposes a requirement that $\eps$ must decay with both the sample size $n$ and the dimension $d$, which contrasts with the fixed breakdown point restrictions (e.g., $\eps < c$ for some $c>0$) typically found in robust sub-Gaussian mean estimation. Lemma \ref{lemma:epsbound} shows that this dependence is not incidental, but actually sheds light on a fundamental limitation of Gaussian approximations under contamination.

\begin{lemma}[Upper bound on $\eps$; proof in \S\ref{sm:proof_epsbound}]\label{lemma:epsbound} Assume $p>2$. Let $b_p \geq \sigma > 0$. Let $\delta > 0$, there exist constants $o_1, d_1, c_1 > 0$ depending on $\delta$ and a constant $c_2$ depending on $\delta$ and $\sigma$ such that if $d > d_1$, $\eps n \geq o_1$,
\begin{equation}
    \label{eq:opt_eps_bound}
    \frac{b_p}{\sigma}\eps^{\frac{1}{2}-\frac{1}{p}} \leq 1 \wedge \frac{c_2}{\ln d} \quad\text{ and }\quad
    \eps^{1-\frac{1}{p}} \geq \frac{2\delta}{b_p} (2 n \ln d)^{-\frac{1}{2}},
\end{equation}
then, one can construct a family $\sF$ with $|\sF| = d$ and distributions $P, P'$ satisfying $b^p_p = \nu^p_p(\sF, P) \leq \nu^p_p(\sF, P') \leq 2 b^p_p$ and $\sigma^2 = \underline{\sigma}_{\sF, P}^2 = \nu^2_2(\sF, P) \leq \nu^2_2(\sF, P') = \underline{\sigma}_{\sF, P'}^2 \leq 2\sigma^2$, such that
for any given estimator $\widehat{E}_f : \bX^n \to \R$ it holds
\[ \sup_{\lambda \in \R } \left| \Pr\left( \sup_{f \in \sF} \sqrt{n} \left(\hat{E}_f\left(X_{1:n}^\eps\right)-Pf\right) \leq \lambda \right) - \Pr\left( \sup_{f \in \sF} G_P(f) \leq \lambda \right) \right| \geq c_1 \]
or
\[ \sup_{\lambda \in \R } \left| \Pr\left( \sup_{f \in \sF} \sqrt{n} \left(\hat{E}_f\left(X_{1:n}^{'\eps}\right)-P'f\right) \leq \lambda \right) - \Pr\left( \sup_{f \in \sF} G_{P'}(f) \leq \lambda \right) \right| \geq c_1. \]
\end{lemma}

Observe that the inequalities in \eqref{eq:opt_eps_bound} can simultaneously hold only if $d$ is large enough and
\[ n \geq C(\delta, \sigma, b_p) (\ln d)^{4\frac{p-2}{p-1} - 1} \]
for a given constant $C(\delta, \sigma, b_p)$. In particular, as $p\to\infty$, this imposes a sample size requirement of at least $\Omega(\ln^3 d)$. This regime is non-restrictive, as $n \gg \ln^3 d$ is a necessary condition for high-dimensional Gaussian approximations for bounded random variables in the absence of contamination \cite[Proposition 2.1]{chernozhukov2023nearly}. Consequently, Lemma \ref{lemma:epsbound} establishes that a valid Gaussian approximation under adversarial contamination is fundamentally possible only if the contamination level vanishes fast enough to satisfy:
\begin{equation}
\label{eq:opt_eps_limit}
\frac{b_p}{\sigma} \eps^{1-\frac{1}{p}} \sqrt{n \ln d} \to 0.
\end{equation}
Since the dependence on the contamination level from Theorem \ref{thm:gaussian_approximation} matches the bound of Lemma \ref{lemma:epsbound}, it is optimal. The limit established in Lemma \ref{lemma:epsbound} must be compared with two results from the literature: Lemma 5.4 of \cite{minsker2018uniform} and Theorem 10 of \cite{deng2020beyond}. First, \cite{minsker2018uniform} demonstrates that the optimal error rate for robust mean estimation under heavy tails and adversarial contamination scales as $b_p \eps^{1-1/p}$. Second, \cite{deng2020beyond} shows that the $\frac{\sqrt{\ln d}}{\sigma}$ penalty arising from Nazarov's inequality is sharp in certain cases. Because establishing a Gaussian approximation requires standardizing the error by a scaling factor of $\sqrt{n}$, it yields a cumulative error term proportional to $\nu_p \eps^{1-1/p} \sqrt{n \ln d}$, matching \eqref{eq:opt_eps_limit}.

\begin{remark}[Role of the covariance matrix \label{remark:strongcov}]
Theorem \ref{thm:gaussian_approximation} and all results in this paper require only that $\underline{\sigma}_{\sF, P} > 0$, without strictly demanding a positive definite $\Sigma_{\sF, P}$. This mirrors the setup used by \cite{chernozhuokov2022improved} to derive \eqref{eq:sotaempirical}. By contrast, imposing stronger structural assumptions is known to yield tighter bounds for the empirical mean under light tails:
\begin{itemize}
    \item \textbf{Positive definiteness:} Assuming a positive definite $\Sigma_{\sF, P}$ alongside bounded coordinates for $X_i$, \cite{chernozhukov2023nearly} established a bound of order $(\ln n)n^{-1/2} \ln^{3/2}(d)$, achieving optimality up to the logarithmic factor. Similar improvements were noted by \cite{kuchibhotla2020high}.
    \item \textbf{Symmetry and variance decay:} Symmetric distributions \cite{chernozhuokov2022improved} and variance decay \cite{lopes2020bootstrapping} also allow the empirical mean to attain the optimal $n^{-1/2}$ rate under light tails.
\end{itemize}

While extending in these directions is beyond the scope of this article, the proof techniques developed in \S\ref{sec:proofideas} provide a natural pathway to adapt these conditions for the trimmed mean, potentially yielding analogous improvements.\end{remark}

\subsection{Gaussian approximation for VC classes}\label{sec:empirical_vector}

Before discussing our results for empirical processes we need a few definitions. Given a measure $Q$ and a family $\sF$ of square-integrable we let $\sN(\sF, d_Q, \delta)$ be the $\delta$-covering number of $\sF$ with respect to the semi-metric $d_Q(f,g) = \sqrt{Q(f-g)^2}$.

\begin{definition}[VC-subgraph class] We say that a class $\sF$ of functions $f: \bX \to \R$ is a VC subgraph class with dimension $v = \text{vc}(\sF)$ if $v< \infty$ is the VC dimension of the collection of its subgraphs, i.e.,
\[ v := \text{vc}\left(\left\{ \left\{(x,t) \in \bX \times \R: t<f(x) \right\} : f \in \sF \right\}\right) < \infty. \]
\end{definition}

\begin{definition}[VC-type class]\label{def:vctype} We say that a class $\sF$ of functions $f: \bX \to \R$ is a VC-type class with envelope $F$ if there are constants $A,v > 0$ such that
\[ \sup_{Q} \sN\left( \sF, d_Q,  \delta \| F\|_{L^2(Q)} \right) \leq \left( \frac{A}{\delta} \right)^{v}  \,\, \forall \delta \in (0,1].\]
Where the supremum is taken over all probability measures over $\bX$ with finite support.
\end{definition}

Our result on the Gaussian approximation for empirical processes follows:

\begin{theorem}[Gaussian approximation for VC classes, proof in \S\ref{subsec:proof_ga_vc}]\label{thm:ga_vc}
    Assume an $\eps$-contaminated sample $X_{i:n}^\eps$ as in \S\ref{subsec:assumptions}. Let $\sF$ be a VC-subgraph class and let $p\in (2,\infty)$ be such that $\nu_p := \nu_p(\sF, P)< \infty$. Define
    \[ K_n := K_n(\sF) = \lceil 12(\text{vc}(\sF) + 1) \ln n \rceil \quad \text{and } \quad \Xi(\delta) = \Ex\left[ \sup_{\substack{f,g \in \sF \\ d_P(f,g) < \delta}} G_P(f-g) \right]. \]
    Suppose that \eqref{eq:condition_b1} holds taking $k := \lfloor \eps n\rfloor + \left( \lfloor \eps n \rfloor \vee K_n \right) < \frac{n}{2}$. Then,
    \begin{align*}
        \varrho \leq c_1 \sqrt{\nu_p} \left( \frac{K_n^{5-\frac{2}{p}}}{n^{1-\frac{2}{p}}} \right)^\frac{1}{4} + c_2 \nu_p \left( \eps^{1-\frac{1}{p}}\sqrt{n K_n } +  \left( \frac{K_n^{3-\frac{2}{p}}}{n^{1-\frac{2}{p}}} \right)^\frac{1}{2} \right) + \frac{5\sqrt{K_n}}{2\underline{\sigma}_{\sF, P}}\,\Xi\left(14 \nu_p \left( \frac{K_n}{n} \vee \eps \right)^{\frac{1}{2}-\frac{1}{p}}\right) + \frac{2}{n}
    \end{align*}
    where $c_1 > 0$ depends on $\nu_2(\sF, P)$ and $\underline{\sigma}_{\sF, P}$, and $c_2 \geq \underline{\sigma}_{\sF, P}^{-1}$ depends only on $\underline{\sigma}_{\sF, P}$.
\end{theorem}

\begin{remark}\label{rem:boundXi}
    The quantity depending on $\Xi$ is a continuity modulus of the limiting Gaussian process and vanishes as the net becomes finer. First notice that if $G_P$ has uniformly $d_P$-continuous sample paths, then $\Xi(\delta) \to 0$ as $\delta \to 0$, so this term is asymptotically negligible. More precisely, non-asymptotic bounds can be derived using entropy integral bounds. For instance, if $\sF$ is a VC-type class (as in Definition \ref{def:vctype}) one can easily see that, for $\delta < \frac{A\|F\|_{L^2(P)}}{e}$,
    \[ \Xi(\delta) \leq C \int_0^\delta \sqrt{\text{vc}(\sF) \ln \frac{A\|F\|_{L^2(P)}}{s}}\, ds \leq C' \delta \sqrt{\text{vc}(\sF) \ln \frac{A\|F\|_{L^2(P)}}{\delta}} \]
    for some absolute constants $C, C'>0$. We stress that, plugging this bound with $\delta \asymp \nu_p (K_n/n \vee \eps)^{\frac12 - \frac1p}$, the resulting $\Xi$-term can be absorbed into the other terms (see the proof of Theorem \ref{thm:generalnorm}).
\end{remark}

Theorem \ref{thm:ga_vc} establishes the first Gaussian approximation result for a robust estimator in the infinite-dimensional setting. In the uncontaminated setting ($\eps = 0$), Theorem \ref{thm:ga_vc} can be compared with the seminal results of \cite[Theorem 2.1]{chernozhukov2016empirical}. The convergence rate established by \cite{chernozhukov2016empirical} is of order at least $( K_n^7/n )^{1/8}$ (see \S\ref{sm:order_ga_vc} for a discussion), whereas our rate improves to $( K_n^5/n )^{1/4}$ in the limit as $p\to\infty$. Furthermore, unlike \cite{chernozhukov2016empirical}, Theorem \ref{thm:ga_vc} circumvents the need for an envelope, requiring only $\nu_p(\sF, P)<\infty$ for some $p \in (2, \infty)$ rather than an envelope in $L^p(P)$ for $p \ge 4$. Even if one were to update the analysis of \cite{chernozhukov2016empirical} using their more recent finite-dimensional approximation bounds (e.g., Lemma \ref{thm:ck_gaussian_approximation}), which could theoretically improve their polynomial rate to $1/4$, the requirement for an envelope $F \in L^p(P)$ would still remain. Theorem \ref{thm:ga_vc} also fares well when compared against \cite{giessing2023gaussian}; while their bounds do not require a VC-subgraph class, they yield a worse dependence on $n$ and demand an envelope in $L^3(P)$.

\section{Vector mean estimation under general norms}\label{sec:vectormean}

In this section we apply our Gaussian approximation results to the following problem:

\begin{problem}[Robust vector mean estimation under a general norm]\label{problem:vectormean} Let $\bX = \R^d$, $\Pm$ be a distribution over $\bX$ and $\| \cdot \|$ be a norm in $\R^d$ satisfying $P\|\cdot\| < \infty$. The mean $\mu_P$ of $P$ is characterized by $\langle v, \mu_P \rangle = P\langle v, \cdot \rangle$ for all $v \in S$, where $S\subset \R^d$ is symmetric and satisfies
\[\forall x\in \R^d\,:\,\|x\| = \sup_{v\in S}\langle x,v\rangle.\]
Let $X_{i:n}^\eps$ be an $\eps$-contaminated sample and $\alpha \in (0,1)$. Find an estimator $\hat{\mu} : \bX^n \to \R$ such that
\begin{equation}
    \label{eq:vectormean}
    \Pr\left( \left\| \hat{\mu}\left(X_{1:n}^\eps\right) - \mu_P \right\| \leq \Phi_P(n, \alpha, \eps) \right) \geq 1 - \alpha
\end{equation}
with $\Phi_P(n, \alpha, \eps)$ as small as possible in terms of $n$, $\alpha$, $\eps$ and $P$.
\end{problem}

\begin{remark}
    In Problem \ref{problem:vectormean} one can always take $S$ to be the dual ball. But some norms can be achieved with much smaller sets, for instance, the infinity norm can be obtained as the supremum over $\{ v \in \R^d : \|v\|_1 = 1 \}$, but also as the supremum over $\{ \pm e_j : j \in [d] \}$.
\end{remark}

As noticed by Minsker \cite{minsker2018uniform}, the setting of Problem \ref{problem:vectormean} instantly suggests a natural family $\sF$ of functions associated with $\| \cdot \|$:
\begin{equation}
    \label{eq:sFnorm}
    \sF := \{ \langle v, \cdot \rangle : v \in S \}.
\end{equation}
The next lemma relates Problem \ref{problem:vectormean} to the problem of uniform mean estimation over the class $\sF$.

\begin{lemma}\label{lemma:relationnormmean} Let $\hat{E}_f$ be any given estimator for $f \in \sF$. Then, every function $\hat{\mu}:(\R^d)^n\to\R^d$ such that
\begin{equation*}
    \forall x_{1:n}\in(\R^d)^n\,:\,\hat{\mu}(x_{1:n})\in \argmin_{\mu \in \R^d} \left( \sup_{f \in \sF} \left| \hat{E}_f(x_{1:n}) - f(\mu) \right| \right)
\end{equation*}
satisfies
\[ \| \hat{\mu}(x_{1:n}) - \mu_P \|  \leq  2\sup_{f \in \sF}\left| \hat{E}_f(x_{1:n})  - f(\mu_P) \right|. \]
Moreover, given a mean estimator $\hat{\mu}$ for $\mu_P$, let $\hat{E}_f(x_{1:n}) = f(\hat{\mu}(x_{1:n}))$ for each $f\in \sF$, then
\[ \sup_{f \in \sF}\left| \hat{E}_f(x_{1:n})  - f(\mu_P) \right| = \| \hat{\mu}(x_{1:n}) - \mu_P \|. \]
\end{lemma}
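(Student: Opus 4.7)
The plan is straightforward: both parts boil down to the key identity $\sup_{f \in \sF}|f(x) - f(y)| = \|x-y\|$, which follows from the symmetry of $S$ and the representation $\|z\| = \sup_{v \in S}\langle v,z\rangle$.

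I would start by proving the second (easier) part first, since it is essentially the identity above. Writing $f = \langle v, \cdot\rangle$ for some $v \in S$ and using linearity,
\[ \sup_{f \in \sF}|\hat{E}_f(x_{1:n}) - f(\mu_P)| = \sup_{v \in S}|\langle v, \hat{\mu}(x_{1:n}) - \mu_P\rangle| = \|\hat{\mu}(x_{1:n}) - \mu_P\|, \]
where the last equality uses that $S$ is symmetric, so the supremum of $\langle v, \cdot\rangle$ equals the supremum of $|\langle v, \cdot\rangle|$.

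For the first part, let $E^\star := \sup_{f \in \sF}|\hat{E}_f(x_{1:n}) - f(\mu_P)|$. The optimality of $\hat{\mu}(x_{1:n})$ means it achieves (or beats) the value of the inner objective at $\mu = \mu_P$, so
\[ \sup_{f \in \sF}|\hat{E}_f(x_{1:n}) - f(\hat{\mu}(x_{1:n}))| \leq E^\star. \]
Then for every $f \in \sF$, the triangle inequality gives
\[ |f(\hat{\mu}(x_{1:n})) - f(\mu_P)| \leq |f(\hat{\mu}(x_{1:n})) - \hat{E}_f(x_{1:n})| + |\hat{E}_f(x_{1:n}) - f(\mu_P)| \leq 2E^\star. \]
Taking the supremum over $f \in \sF$ and using the identity from the second part (with $\mu_P$ and $\hat{\mu}(x_{1:n})$ in place of the two points) yields $\|\hat{\mu}(x_{1:n}) - \mu_P\| \leq 2E^\star$, as desired.

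There is essentially no obstacle here; the only subtle point is ensuring measurability/existence of the $\argmin$ selector, but the statement assumes we are handed such a function $\hat{\mu}$, so this is not an issue. The entire argument is a three-line triangle inequality once the dual representation of $\|\cdot\|$ via the symmetric set $S$ is in place.
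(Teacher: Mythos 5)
Your proof is correct and takes essentially the same route as the paper's: both rest on the identity $\sup_{f\in\sF}|f(x)-f(y)|=\|x-y\|$ (which you make explicit via symmetry of $S$, while the paper leaves it implicit), plus the triangle inequality and the defining minimizing property of $\hat\mu$. The only cosmetic difference is that you apply the triangle inequality per $f$ before taking the supremum and prove the second claim first, whereas the paper works directly at the level of suprema; the content is identical.
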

\begin{proof} Just notice that
\begin{align*}
    \| \hat{\mu}(x_{1:n}) - \mu_P \| & = \sup_{f \in \sF} \left|f( \hat{\mu}(x_{1:n})) - f(\mu_P)\right| \\
    & \leq \sup_{f \in \sF} \left|f( \hat{\mu}(x_{1:n})) - \hat{E}_f(x_{1:n})\right| + \sup_{f \in \sF}\left| \hat{E}_f(x_{1:n})  - f(\mu_P) \right|\\
    & \leq \sup_{f \in \sF} \left|f( \mu_P) - \hat{E}_f(x_{1:n})\right| + \sup_{f \in \sF}\left| \hat{E}_f(x_{1:n})  - f(\mu_P) \right|\\
    & = 2\sup_{f \in \sF}\left| \hat{E}_f(x_{1:n})  - f(\mu_P) \right|
\end{align*}
where in the second inequality we used the definition of $\hat{\mu}$ as a minimizer. The second claim follows by definition.
\end{proof}

Let $\Gamma$ be the covariance matrix of $P$. Given $p > 2$ one can check that
\[  \nu_p(\sF,\Pm) = \sup_{v \in S} (\Pm\,|\langle X-\mu_\Pm, v \rangle|^{p})^\frac{1}{p} \quad\text{ and }\quad \underline{\sigma}_{\sF, P} = \inf_{v\in S} v \Gamma v^t. \]
In the special case where $\| \cdot \|$ is the euclidean norm, it holds that $\nu_2^2(\sF,\Pm) = \|\Gamma\|_\textrm{op}$. We also define the Gaussian width of a set $A \subset \R^d$ as
\[ w(A) := \Ex{\sup_{v \in A} \langle v, W \rangle}  \]
where $W \sim \sN(0, I_d)$. Notice that $\{G_Pf : f \in \sF\}$ has the same distribution as $\left\{ \left\langle v, \Gamma^\frac{1}{2} W  \right\rangle : v \in S \right\}$, thus
\[ \Ex Z(\sF) = \Ex \sup_{v\in S} \left\langle v, \Gamma^\frac{1}{2} W \right\rangle = w\left(\Gamma^\frac{1}{2}S\right). \]

Early work on Problem \ref{problem:vectormean} includes \cite{Minsker2015,Joly2017}. In the Hilbert space setting, a breakthrough result by Lugosi and Mendelson \cite{Lugosi2019a} presented a higher-dimensional version of the median of means. A more recent estimator by the same authors \cite{Lugosi2021}, based on high-dimensional Winsorized means, gives\footnote{They only derive explicit bounds for $p=2$, but the more general bounds follow from the same methods.}
\begin{equation*}\Phi_P(n,\alpha,\varepsilon)= C\,\left(\sqrt{\frac{{\rm tr}(\Gamma)}{n}} + \sqrt{\frac{\|\Gamma\|_{\rm op}}{n} \ln \frac{1}{\alpha}} + \inf_{p>1}\nu_p(\sF,\Pm)\eps^{1-\frac{1}{p}}\right)\end{equation*}
for some constant $C$. The term depending on $\eps$, here called contamination term, is optimal as it is optimal for single function mean estimation \cite{minsker2018uniform}. The term on $\alpha$, here called fluctuation term, is also known to be optimal if we only assume finite second moments. Finally, the independent term, here called complexity term, is also optimal.

The problem of general norms was studied by Lugosi and Mendelson \cite{Lugosi2019} and Depersin and Lecu\'{e} \cite{Depersin2021}, both papers present estimators based on the median of means. The best-known bound for Problem \ref{problem:vectormean} is given by \cite{OliveiraResende} using a trimmed mean based estimator and yields
\begin{equation}\label{eq:oliveirauniformeman}\Phi_P(n,\alpha,\varepsilon)=  C\,\left(\mathbb{E}\left\|\frac{1}{n}\sum_{i=1}^n X_i - \mu_P \right\| + \inf_{q \in [1,2]} \nu_q(\sF,P)\left(\frac{1}{n}\ln\frac{3}{\alpha}\right)^{1-\frac{1}{q}} + \inf_{p\geq 1} \nu_p(\sF,P)\eps^{1-\frac{1}{p}}\right)\end{equation}
requiring only $\nu_1(\sF, P) < \infty$.

There are also lower bounds available in the literature. In \cite{Depersin2021} a lower bound is obtained for Gaussian distributions under $\varepsilon=0$: let $\Gamma$ be positive-definite and let $P = \sN(\mu, \Gamma)$ in the setup of Problem \ref{problem:vectormean}, then exists $c>0$ such that
\begin{equation}\label{eq:lecuelowerbound}
    \Phi_P(n, \alpha, 0) \geq c\left( \frac{w\left(\Gamma^{\frac{1}{2}}S\right)}{\sqrt{n}} + \nu_2(\sF, P)\sqrt{\frac{1}{n}\ln \frac{1}{\alpha}}\right)
\end{equation}
for any choice of $\hat{\mu}$. Moreover, taking $\hat{\mu}$ as the empirical mean, \cite{Depersin2021} also proved that
\begin{equation}\label{eq:lecueupperbound}
    \Phi_P(n, \alpha, 0) \leq C\left( \frac{w\left(\Gamma^{\frac{1}{2}}S\right)}{\sqrt{n}} + \nu_2(\sF, P)\sqrt{\frac{1}{n}\ln \frac{1}{\alpha}}\right)
\end{equation}
for some constant $C>c$. When $\| \cdot \|$ is the euclidean norm and $P$ is any distribution with covariance $\Gamma$, all the complexity terms previously discussed are of the same order (see Proposition 2.5.1 of \cite{talagrand2014upper}), \textit{i.e.},
\[ \mathbb{E}\left\|\frac{1}{n}\sum_{i=1}^nX_i - \mu_P\right\|_2 \approx \sqrt{\frac{\text{tr}(\Gamma)}{n}} \approx \frac{w\left(\Gamma^{\frac{1}{2}}\mathbb{S}^{d-1}\right)}{\sqrt{n}}. \]
However, it is unclear if the expectation of the empirical process is close to the Gaussian width for general norms, which is optimal by \eqref{eq:lecuelowerbound}. In addition, \eqref{eq:lecuelowerbound} also points to the optimality of the fluctuation $\nu_2(\sF, P)\sqrt{\frac{1}{n}\ln \frac{1}{\alpha}}$ under finite second moment. Our main result in this section follows:

\begin{theorem}\label{thm:generalnorm} Assume an $\eps$-contaminated sample $X_{i:n}^\eps$ as in \S\ref{subsec:assumptions}. Recall the setup of Problem \ref{problem:vectormean} and the definition of $\Xi$ from Theorem \ref{thm:ga_vc}. Assume $\nu_p(\sF, P) < \infty$ for some $p > 2$. Let $K_n := K_n(\sF) = \lceil 12(d + 3) \ln n \rceil$ and suppose that \eqref{eq:condition_b1} holds taking $k := \lfloor \eps n\rfloor + \left( \lfloor \eps n \rfloor \vee K_n \right) < \frac{n}{2}$. Exist $c_1>0$ depending on $\nu_2(\sF,P)$ and on $\underline{\sigma}_{\sF, P}$ and $c_2 \geq \underline{\sigma}_{\sF, P}^{-1}$ depending only on $\underline{\sigma}_{\sF, P}$ such that if
\begin{equation}
    c_1 \sqrt{\nu_p} \left( \frac{K_n^{5-\frac{2}{p}}}{n^{1-\frac{2}{p}}} \right)^\frac{1}{4} + c_2 \nu_p \left( \eps^{1-\frac{1}{p}}\sqrt{n K_n } +  \left( \frac{K_n^{3-\frac{2}{p}}}{n^{1-\frac{2}{p}}} \right)^\frac{1}{2} \right) + \frac{2}{n} \leq \frac{\alpha}{2},
\end{equation}
then $\hat{\mu}_{n,k}(x_{1:n}) \in \argmin_{\mu \in \R^d} \left( \sup_{f \in \sF} \left| \Tmhat_{n,k}(f, x_{1:n}) - f(\mu)  \right| \right)$ satisfies \eqref{eq:vectormean} with
\[ \Phi_P(n, \alpha, \eps) =  \frac{w\left(\Gamma^{\frac{1}{2}}S\right)}{\sqrt{n}} + \nu_2(\sF)\sqrt{\frac{1}{n}\ln \frac{2}{\alpha}}. \]
\end{theorem}
\begin{proof} By Lemma \ref{lemma:relationnormmean},
\begin{equation}\label{eq:boundnormtm}
     \left\| \hat{\mu}_{n,k}(X^\eps_{1:n}) - \mu_P \right\|  \leq  2\sup_{f \in \sF}\left| \Tmhat_{n,k}^\eps(f)  - f(\mu_P) \right|.
\end{equation}

We use Theorem \ref{thm:ga_vc} to bound the RHS of \eqref{eq:boundnormtm}. Notice that the VC-subgraph dimension of $\sF$ is at most $d+2$. This is because for a given $f = \langle \cdot , v \rangle$ its subgraph $sg(f) := \{ (x,t) : \langle \cdot , v \rangle \leq t \} \in \R^{d+1}$ is a half-space and the VC dimension of all half-spaces is $d+2$, thus $vc(\sF) \leq d+2$. To apply Theorem \ref{thm:ga_vc} we need to bound the term depending on $\Xi$. Taking $w = \Gamma^\frac{1}{2}(v-v')$ we obtain
\[ \Xi(\delta) = \Ex \sup_{\substack{v,v' \in S,\\ \|\Gamma^\frac{1}{2}(v-v')\|_2 \leq \delta}} \langle \Gamma^\frac{1}{2}(v-v'), W \rangle \leq \Ex \sup_{\|w\|_2 \leq \delta} \langle w, W \rangle = \delta \Ex \|W\|_2 \leq \delta \sqrt{d} \leq \delta \sqrt{K_n}. \]
Thus, considering the cases $\lfloor \eps n \rfloor \leq K_n$ and $\lfloor \eps n \rfloor \geq K_n$ yields
\[ \frac{5\sqrt{K_n}}{2\underline{\sigma}_{\sF, P}}\,\Xi\left(14 \nu_p \left( \frac{K_n}{n} \vee \eps \right)^{\frac{1}{2}-\frac{1}{p}}\right) \leq \frac{35 \nu_p}{\underline{\sigma}_{\sF, P}} K_n  \left( \frac{K_n}{n} \vee \eps \right)^{\frac{1}{2}-\frac{1}{p}} \leq \frac{35 \nu_p}{\underline{\sigma}_{\sF, P}}\left( \eps^{1-\frac{1}{p}}\sqrt{n K_n } +  \left( \frac{K_n^{3-\frac{2}{p}}}{n^{1-\frac{2}{p}}} \right)^\frac{1}{2} \right). \]

Given the bound above and the assumptions of Theorem \ref{thm:generalnorm}, Theorem \ref{thm:ga_vc} yields
\[ \varrho = \sup_{\lambda \in \R}\left| \Pr\left( \sup_{f \in \sF}\left| \Tmhat_{n,k}^\eps(f)  - f(\mu_P) \right| \leq \lambda \right) -  \Pr\left( Z(\sF) \leq \lambda \right)
\right| \leq \frac{\alpha}{2}. \]

Now, recall that $Z(\sF)$ has the same distribution as $\sup_{v \in S} \langle \Gamma^\frac{1}{2}v, W \rangle$. The proof follows by taking $\lambda$ such that $\Pr\left( Z(\sF) \leq \lambda \right) \geq 1 - \frac{\alpha}{2}$. By Borell-TIS inequality, it suffices to take
\[\lambda = \frac{w(\Gamma^{\frac{1}{2}}S)}{\sqrt{n}} + \nu_2(\sF)\sqrt{\frac{1}{n}\ln \frac{2}{\alpha}}.\]
\end{proof}

When its assumption on $\alpha$ holds, Theorem \ref{thm:generalnorm} is optimal, as it matches the theoretical lower bound in \eqref{eq:lecuelowerbound}. Moreover, this is the first result to feature a complexity term governed by the Gaussian width for general norms. Methodologically, Theorem \ref{thm:generalnorm} departs fundamentally from prior work \cite{Minsker2015, Joly2017, Lugosi2019a, Lugosi2021, Lugosi2019, Depersin2021, OliveiraResende}. Rather than deriving uniform high-probability bounds for the supremum over $\sF$, our analysis tackles the problem by establishing a Gaussian approximation. Unlike previous results that allow a fixed contamination proportion $\eps \in [0,c)$, Theorem \ref{thm:generalnorm} requires $\eps$ to vanish. By Lemma \ref{lemma:epsbound}, this is a fundamental limitation of the Gaussian approximation approach.

We now discuss the presence of the assumption $\underline{\sigma}_{\sF, P} > 0$ in our bound. The case $\underline{\sigma}_{\sF, P} = 0$ corresponds to the existence of a non-trivial subspace $H \subset \R^d$ such that $X - \mu_P \in H^\perp$ a.s. Let $\sG = \{\langle \cdot, v\rangle : v \in S \cap H^\perp \}$, on the clean sample we have
\[ \sup_{f \in \sF}\left| \Tmhat_{n,k}(f, X_{1:n})  - f(\mu_P) \right| = \sup_{f \in \sG}\left| \Tmhat_{n,k}(f, X_{1:n})  - f(\mu_P) \right| \]
since $\langle X_i - \mu_P, v \rangle = 0$ for all $v \in H$. 
Replacing the previous equality in \eqref{eq:boundnormtm}, Theorem \ref{thm:generalnorm} follows considering only the subspace $H^\perp$. Thus, on the case $\eps = 0$, we can replace $\underline{\sigma}_{\sF, P}$ by the smallest non-zero eigenvalue of $\Gamma$. The same also holds when $\eps > 0$, but with a more convoluted argument, as discussed in Remark \ref{rem:replacesigmazero}. Consequently, since $\underline{\sigma}_{\sF, P}$ can always be replaced by the smallest non-zero eigenvalue, the assumptions on it will hold trivially for large $n$.

\section{High-dimensional bootstrap approximations}\label{sec:bootstrap}

Gaussian approximation results are typically accompanied by bootstrap procedures to enable feasible statistical inference. Let $\tilde{X}_{1:n}$ be an i.i.d. sample drawn from the empirical measure $\Pmhat_n = \frac{1}{n} \sum_{i=1}^n \delta_{X_i}$. Conditionally on a sample $X_{1:n}$, one can define the bootstrapped empirical process for each $f \in \sF$ as
\begin{equation}\label{def:em_bootstrap}
     \tilde{\G}_n(f) := \frac{1}{\sqrt{n}} \sum_{i=1}^n \left(f(\tilde{X}_i) - \Pmhat_n(f)\right).
\end{equation}
Setting $\tilde{Z}_n(\sF) := \sup_{f\in\sF} \tilde{\G}_n(f)$, bounds analogous to those discussed for $\varrho^E$ are available in the literature for the conditional Kolmogorov-Smirnov distance:
\[ \tilde{\varrho}^E := \sup_{\lambda \in \R } \left| \Pr\left( \left.\tilde{Z}_n(\sF) \leq \lambda \,\right| X_{1:n} \right) - \Pr\left( Z(\sF) \leq \lambda \right) \right|. \]

Similarly, we define an empirical bootstrap for trimmed mean as
\begin{equation}\label{def:tm_bootstrap}
     \tilde{\T}_{n,k}^\eps(f) := \frac{\sqrt{n}}{n-2k} \sum_{i=k+1}^{n-k} \left( f(\tilde{X}^\eps_{(i)}) - \Tmhat_{n,k}^\eps(f) \right),
\end{equation}
where $(\cdot)$ satisfies $f(\tilde{X}^\eps_{(1)}) \leq \cdots \leq f(\tilde{X}^\eps_{(n)})$. We also let $\tilde{Z}_{n,k}^\eps(\sF) := \sup_{f \in \sF}\, \tilde{\T}_{n,k}^\eps(f)$. Our goal is to establish that the conditional distribution of $\tilde{Z}_{n,k}^\eps(\sF)$ approximates the distribution of $Z(\sF)$, \textit{i.e.}, we aim to bound
\[ \tilde{\varrho} := \sup_{\lambda \in \R} \left| \Pr\left( \left.\tilde{Z}_{n,k}^\eps(\sF) \leq \lambda\,\right| X_{1:n}^\eps \right) - \Pr\left( Z(\sF) \leq \lambda \right) \right| \]
with high probability with respect to the contaminated sample $X_{1:n}^\eps$.

\begin{theorem}[High-dimensional bootstrap approximation for trimmed means; proof in \S\ref{sm:proof_bootstrap}]\label{thm:bootstrap_approximation} Assume an $\eps$-contaminated sample $X_{i:n}^\eps$ as in \S\ref{subsec:assumptions}. Let $nd \geq 3$ and $\nu_p := \nu_p(\sF, P) < \infty$ for some $p \in (2,\infty)$. If $k := 22\left(\left\lfloor \eps n \right\rfloor \vee \left\lceil \ln(nd) \right\rceil\right) < \frac{n}{2}$ and \eqref{eq:condition_b1} holds, then with probability at least $1 - c_1 \sqrt{\nu_p} \left( \frac{\ln^{5-\frac{2}{p}}(nd)}{n^{1-\frac{2}{p}}} \right)^\frac{1}{4} - \frac{1}{n}$ it holds that
\[\tilde{\varrho} \leq c_1 \sqrt{\nu_p} \left( \frac{\ln^{5-\frac{2}{p}}(nd)}{n^{1-\frac{2}{p}}} \right)^\frac{1}{4} + c_2 \nu_p \left( \eps^{1-\frac{1}{p}}\sqrt{n \ln(nd) } +  \left( \frac{\ln^{3-\frac{2}{p}}(nd)}{n^{1-\frac{2}{p}}} \right)^\frac{1}{2} \right) + \frac{1}{n}, \]
where $c_1 > 0$ depends on $\nu_2(\sF, P)$ and $\underline{\sigma}_{\sF, P}$, and $c_2 \geq \underline{\sigma}_{\sF, P}^{-1}$ depends only on $\underline{\sigma}_{\sF, P}$.
\end{theorem}

The convergence rate and the dependencies on $p$ and $\eps$ observed in Theorem \ref{thm:bootstrap_approximation} mirror the ones in Theorem \ref{thm:gaussian_approximation}. Along with that, the comparisons between Theorem \ref{thm:bootstrap_approximation} and the best-known result for the empirical mean when $\eps = 0$, which appear in \cite{chernozhuokov2022improved}, mirror the ones between Theorem \ref{thm:gaussian_approximation} and its analogous from \cite{chernozhuokov2022improved}. Lemma \ref{lemma:epsbound} also restricts the practical applicability of bounding $\tilde{\varrho}$: to establish valid bootstrap inference a bound on $\tilde{\varrho}$ needs to be combined with a corresponding Gaussian approximation (e.g., a bound on $\varrho$) via the triangle inequality. Therefore, since the Gaussian approximation breaks down when the contamination scales as shown by Lemma \ref{lemma:epsbound}, the overall bootstrap procedure is rendered unfeasible at that same contamination scale. Nonetheless, it remains an open question whether a direct bootstrap approximation --- one that bypasses the intermediate Gaussian approximation step --- is feasible under non-vanishing contamination levels. Specifically, it is unclear if 
\[ \sup_{\lambda \in \R} \left| \Pr\left( \left.\tilde{Z}_{n,k}^\eps(\sF) \leq \lambda\,\right| X_{1:n}^\eps
 \right) - \Pr\left( Z_{n,k}^\eps(\sF) \leq \lambda \right) \right| \to 0 \]
in the regime where $n,d\to\infty$ but $\eps \not\to 0$. 

\subsection{Application to uniform confidence intervals}

We can apply Theorems \ref{thm:gaussian_approximation} and \ref{thm:bootstrap_approximation} to construct uniform confidence intervals. First, notice that if some $f\in\sF$ has a variance that dominates the variances of the other functions in $\sF$, then $G_P$ will likely be dominated by this single function $f$. To avoid this, we start by renormalizing every $f \in \sF$ to unit variance. The next Lemma, adapted from \cite[Lemma 3.1]{chernozhukov2023high}, provides a solution.

\begin{lemma} Let $\sF$ be a family with $d := |\sF| < \infty$. Let $S_n$ be any random variable such that $\sup_{\lambda \in \R} \left| \Pr\left( S_n \leq \lambda \right) - \Pr\left( Z(\sF) \leq \lambda \right) \right| \to 0$. If $S_n'$ is a random variable such that $\frac{\sqrt{\ln d}}{ \underline{\sigma}_{\sF, P} }| S_n - S_n' | \to 0$ in probability, then $\sup_{\lambda \in \R} \left| \Pr\left( S'_n \leq \lambda \right) - \Pr\left( Z(\sF) \leq \lambda \right) \right| \to 0$. The same holds under conditioning.
\end{lemma}

In particular, let $\tau \in [0,1]$ and $\widehat{\sigma}_{n,k}^\eps(f)$ be an estimate of the standard deviation of $f\in\sF$. Define
\[ S_n = \sqrt{n}\sup_{f\in\sF} \left( \sigma_f \right)^{-\tau} \left( \widehat{T}_{n,k}^\eps(f) - Pf \right) \quad \text{ and } \quad S_n' = \sqrt{n} \sup_{f\in\sF} \left( \widehat{\sigma}_{n,k}^\eps(f) \right)^{-\tau} \left( \widehat{T}_{n,k}^\eps(f) - Pf \right), \]
as well as their analogous $\tilde{S}_n$ and $\tilde{S}_n'$ replacing $\widehat{T}_{n,k}^\eps - Pf$ by $\tilde{T}_{n,k}^\eps - \widehat{T}_{n,k}^\eps$. It follows from Theorem \ref{thm:gaussian_approximation} and Theorem \ref{thm:bootstrap_approximation} that $S_n$, $S_n'$ and $\tilde{S}_n'$ all converge to $Z( \sF_\tau )$, where $\sF_\tau := \{ \sigma_f^{-\tau} f : f\in\sF \}$ (Theorems \ref{thm:gaussian_approximation} and \ref{thm:bootstrap_approximation} give the convergence of $S_n$ and $\tilde{S}_n$, and the Lemma above transfers it to the feasible statistics $S_n'$ and $\tilde{S}_n'$). Taking
\[ \widehat{\sigma}^\eps_{n,k}(f) = \sqrt{ \widehat{T}_{n,k}^\eps\left(  \left(f - \widehat{T}_{n,k}^\eps(f)\right)^2 \right) } \]
with the same $k$ as in Theorem \ref{thm:gaussian_approximation}, there exists a constant $c>0$ depending on $\tau$ and $\underline{\sigma}_{\sF,P}$\footnote{Apply the mean value theorem to the function $h(x) = x^{-\tau}$ for $\tau \in (0,1]$. For any $\sigma_f, \sigma_g \geq \frac{1}{2}\underline{\sigma}_{\sF,P}$, the absolute value of the derivative is bounded by $|h'(x)| = \tau x^{-\tau-1} \leq \tau \left(\frac{1}{2}\underline{\sigma}_{\sF,P}\right)^{-\tau-1}$. Therefore, $\left| \sigma_f^{-\tau} - \sigma_g^{-\tau} \right| \leq \tau \left(\frac{1}{2}\underline{\sigma}_{\sF,P}\right)^{-\tau-1} \left| \sigma_f - \sigma_g \right|$, which yields the desired constant $c = \tau \left(\frac{1}{2}\underline{\sigma}_{\sF,P}\right)^{-\tau-1}$. For $\tau = 0$, the difference is trivially $0$.} such that if $\sigma_f \geq \frac{1}{2}\underline{\sigma}_{\sF,P}$ for all $f\in\sF$, it holds that
\[ |S_n - S_n'| \leq c \sqrt{n}\sup_{f\in\sF} \left|  \widehat{T}_{n,k}^\eps(f) - Pf \right| \sup_{f\in\sF} \left|  \widehat{\sigma}^\eps_{n,k}(f) - \sigma_f \right| \]
and
\[ |\tilde{S}_n - \tilde{S}_n'| \leq c \sqrt{n} \sup_{f\in\sF} \left|  \tilde{T}_{n,k}^\eps(f) - \widehat{T}_{n,k}^\eps(f) \right| \sup_{f\in\sF} \left|  \widehat{\sigma}^\eps_{n,k}(f) - \sigma_f \right|. \]
In Lemma \ref{lem:bound_variance} we show that
\begin{equation}
\label{eq:bound_variance}
    \mathbb{P}\left( \sup_{f\in\sF} \left| \widehat{\sigma}^\eps_{n,k}(f) - \sigma_f \right| \geq \frac{c}{\underline{\sigma}_{\sF, P}} \left\{ \nu_{p \wedge 4}^2  \frac{\ln^{1 - \frac{2}{p \wedge 4}}(nd)}{n^{1-\frac{2}{p \wedge 4}}} + \nu_p^2 \eps^{1-\frac{2}{p}} \right\} \right) \leq \frac{1}{n}.
\end{equation}
Theorems \ref{thm:gaussian_approximation} and \ref{thm:bootstrap_approximation} can be used to approximate the quantities $\sqrt{n}\sup_{f\in\sF} \left|  \widehat{T}_{n,k}^\eps(f) - Pf \right|$ and $\sqrt{n} \sup_{f\in\sF} \left|  \tilde{T}_{n,k}^\eps(f) - \widehat{T}_{n,k}^\eps(f) \right|$ by $Z(\sF)$. Since these quantities involve the two-sided supremum $\sup_{f\in\sF}|\cdot|$, the relevant Gaussian object is $\sup_{f\in\sF\cup(-\sF)} G_P(f)$; throughout this subsection we therefore take $\sF$ to be symmetric (equivalently, replace $\sF$ by $\sF\cup(-\sF)$, which at most doubles $d$ and leaves all our bounds unchanged), so that $\sup_{f}|\cdot|$ is a genuine one-sided supremum and Theorems \ref{thm:gaussian_approximation} and \ref{thm:bootstrap_approximation} apply verbatim. By the Borell–TIS inequality, both quantities can be bounded by $2\sqrt{2\ln(nd)}$
with probability at least $1-\varrho - 2\tilde{\varrho} - \frac{1}{n}$. Combined with \eqref{eq:bound_variance}, this justifies the renormalization. Indeed, when compared with the order of $\varrho$ from Theorem \ref{thm:gaussian_approximation}, the additional error term is negligible. Notice that since $\tau \in [0,1]$, we allow not only for no standardization ($\tau = 0$), but also for partial standardization ($\tau \in (0,1)$) as done in \cite{lopes2020bootstrapping, liu2024robust}, and full standardization ($\tau = 1$).

Finally, we can construct the uniform confidence intervals. Let $\alpha \in (0, 1)$ and define the critical value $c_{1-\alpha}$ as the $(1-\alpha)$-quantile of the bootstrap statistic $\tilde{S}_n'$, or more specifically,
\[ c_{1-\alpha} := \inf \left\{ \lambda \in \R : \Pr\left( \tilde{S}_n' \leq \lambda \mid X_1, \dots, X_n \right) \geq 1 - \alpha \right\}. \]
This leads to the uniform confidence interval $\mathcal{C}_n$ for $Pf$
\[ \mathcal{C}_n(f) := \left[ \widehat{T}_{n,k}^\eps(f) - \frac{c_{1-\alpha} \widehat{\sigma}_{n,k}^\eps(f)^\tau}{\sqrt{n}}, \widehat{T}_{n,k}^\eps(f) + \frac{c_{1-\alpha} \widehat{\sigma}_{n,k}^\eps(f)^\tau}{\sqrt{n}} \right], \]
which satisfies $\Pr\left( \forall f \in \sF, Pf \in \mathcal{C}_n(f) \right) \geq 1 - \alpha - O(\varrho + \tilde\varrho)$.

\subsubsection{Experiments}

In this section, we evaluate the finite-sample performance of the proposed empirical bootstrap for the trimmed mean (TM, as in \eqref{def:tm_bootstrap}) in constructing uniform confidence intervals. We compare our approach against three natural baselines and state-of-the-art robust estimators: the standard empirical bootstrap for the empirical mean (EM, as in \eqref{def:em_bootstrap}), the approach proposed by \cite{liu2024robust} (MOM), and the approach proposed by \cite{kock2025high} (WM). We report the empirical coverage and average interval widths of the uniform confidence intervals at a confidence level of $1 - \alpha = 0.95$. All empirical results are averaged over $200$ independent Monte Carlo replications. We consider all four combinations of $n \in \{200, 1000\}$ and $d \in \{1000, 5000\}$. Throughout the simulations, the covariance matrix $\Sigma$ follows an autoregressive structure $\Sigma_{i,j} = 0.5^{|i-j|}$ for $1 \leq i, j \leq d$. The data generating mechanisms were adapted from \cite{liu2024robust} and are defined as follows:

\begin{enumerate}
    \item \textbf{Contaminated Case:} To evaluate adversarial robustness, we add outliers to a clean Gaussian sample with covariance $\Sigma$. We randomly select a subsample of size $\lfloor \eps n \rfloor$, where $\eps = 0.5 / \sqrt{n \ln d}$. For each contaminated observation, all $d$ entries are set to $10$.

    \item \textbf{Elliptical Case:} To introduce complex dependency structures while maintaining heavy tails, the data is generated from an elliptical distribution $X_i = \eta \Sigma^{1/2} Z_i / \|Z_i\|_2$, where $Z_i \sim \mathcal{N}(0, I_d)$. The random radius $\eta$ is defined as $\eta = \sqrt{\frac{dF}{\nu^2-2}}$, where $F$ follows an $F$-distribution with $\nu = 3$ degrees of freedom. 

    \item \textbf{Low-dimensional Case:} We generate data in dimension $50$ following the elliptical case and then, using an orthonormal basis, we map the low-dimensional distribution into $\R^d$. 

\end{enumerate}

To construct the uniform confidence intervals, we compute the estimators and their corresponding bootstrap quantiles over $B = 500$ bootstrap iterations. We set the trimming level as $k = \lfloor \epsilon n \rfloor + \lceil \ln(d) \rceil$ for both the TM and the WM. We implement the MOM estimator following the default configuration recommended by \cite{liu2024robust}. Specifically, we allocate a $10\%$ hold-out sample for variance and mean estimation and set the number of buckets as $\lceil \ln(n) \rceil$. Since \cite{liu2024robust} do not consider contamination we let $\lceil \ln(n) \rceil + 2\lfloor \eps n \rfloor$ in the contaminated case. The addition of $2\lfloor \eps n \rfloor$ term is justified by the experiments of \cite{OliveiraResende}. We use a partial standardization parameter of $\tau = 0.9$ for both the TM and the MOM.

\begin{figure}[ht]
    \centering
    \includegraphics[width=\linewidth]{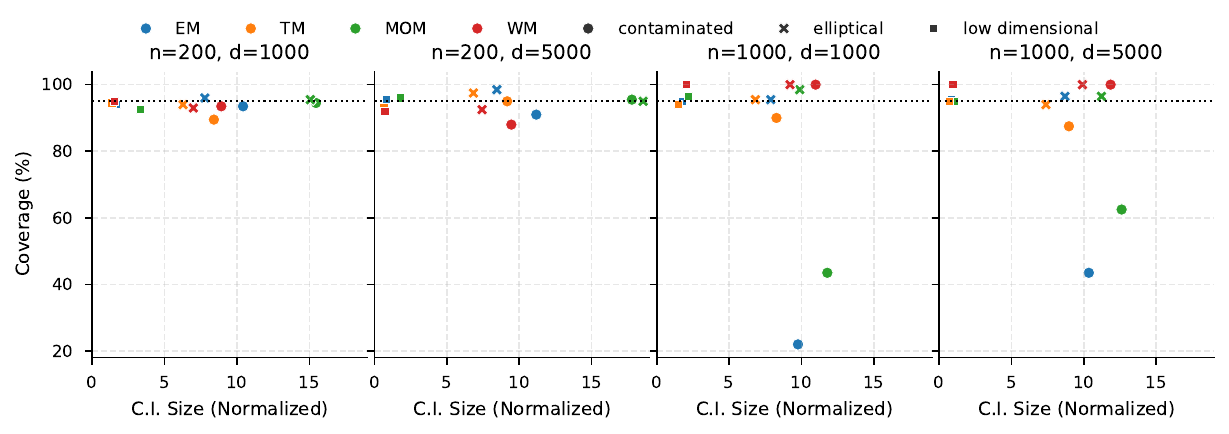}
    \caption{Comparison between the empirical bootstrap for the empirical mean (EM), the empirical bootstrap for the trimmed mean (TM), the bootstrap procedure from \cite{liu2024robust} and the bootstrap procedure proposed by \cite{kock2025high} (WM). We vary the dimension $d$, the sample size $n$ and the data distribution. We observe that the trimmed mean yields smaller confidence intervals while retaining a high coverage. The empirical average and the MOM approach do not behave well under contamination. Meanwhile, the approach from \cite{kock2025high} has a performance comparable to the trimmed mean, but favors coverage in exchange to larger confidence intervals.}
    \label{fig:comparisons}
\end{figure}

The results of our experiments are summarized in Figure~\ref{fig:comparisons}, which displays a comparison between the empirical bootstrap for the empirical mean (EM, as in \eqref{def:em_bootstrap}), our proposed empirical bootstrap for the trimmed mean (TM, as in \eqref{def:tm_bootstrap}), the MOM-based bootstrap procedure from \cite{liu2024robust}, and the winsorized mean (WM) bootstrap approach proposed by \cite{kock2025high}. Across the various sample sizes $n$, dimensions $d$, and data distributions, we observe that the proposed trimmed mean consistently yields the tightest confidence intervals while successfully retaining high coverage probabilities. In contrast, both the standard empirical average and the MOM approach fail to behave well under adversarial contamination. Meanwhile, the winsorized mean approach of \cite{kock2025high} performs comparable to our trimmed mean, but favors higher coverage at the expense of wider confidence intervals.

\begin{remark}[Computational Complexity]
The baseline bootstrap procedures, including the empirical mean, the MOM approach \cite{liu2024robust}, and the winsorized mean \cite{kock2025high}, generally require $O(B d n)$ operations. In contrast, our proposed method necessitates an element-wise ordering step, which introduces a slight computational overhead and results in a time complexity of $O(B d n \log n)$. This additional $\log n$ factor is a mild issue in practice. Since the primary focus is on the high-dimensional regime $d \gg n$, the dependency on $n$ is not a severe bottleneck. Empirically, the computational cost remains manageable: for $n=1000$ and $d=5000$, the MOM estimator takes approximately $1$ second to run, whereas our proposed estimator requires only around $10$ seconds.
\end{remark}

\section{Main proofs}\label{sec:proofideas}

Our proofs are based on a strategy introduced in \cite{OliveiraResende}. Start defining, for a given $M>0$, the truncation function 
\[
    \tau_M(x) := \begin{cases}M\text{, if }x>M\\x\text{, if }-M\leq x \leq M\\-M\text{, if }x<-M\end{cases}.
\]

The core observation from \cite{OliveiraResende} — which was inspired by \cite{Lugosi2021, rico2022a} — is that the trimmed mean of a contaminated sample is \textit{uniformly} close to the empirical mean of the truncated clean sample, provided that we carefully select the trimming level $k$ and the corresponding truncation level $M$. Specifically, 
\begin{equation}
    \label{eq:trimmingtruncationapprox}
    \sup_{f \in \sF}\left| \Tmhat_{n,k}^\eps(f) - \frac{1}{n}\sum_{i=1}^n \tau_M(f(X_i)) \right| \text{ is small.}
\end{equation}

This observation follows from two lemmata that will be introduced shortly. Before proceeding, we emphasize the utility of \eqref{eq:trimmingtruncationapprox}: it allows us to analyze the trimmed mean exploiting the rich literature on controlling the empirical mean of bounded random variables. In particular, we can apply the best-known Gaussian approximation result for the empirical mean, which is the following:

\begin{theorem}[Adapted from Theorem 2.4 and Lemma 4.3 of \cite{chernozhuokov2022improved}]\label{thm:ck_gaussian_approximation} Define
\[ \delta_{n,d,B} = \left(\frac{B^2 \ln^5(nd) }{n}\right)^\frac{1}{4} \text{and } \delta_{n,p,d,B} = \sqrt{\frac{B^2 \ln^{3-\frac{2}{p}}(nd)}{n^{1-\frac{2}{p}}}}. \]
Suppose that $\nu_4^4(\sF,P) \leq B^2 \nu_2^2(\sF,P)$ for every $f \in \sF$ for some $B>0$ and that $\underline{\sigma}_{\sF, P} > 0$. If $a,b > 0$ satisfy $\nu_2(\sF,P) \geq a$ and $\underline{\sigma}_{\sF, P} \geq b$, then exists $C := C(a,b) >0$ such that
    \begin{enumerate}
        \item if $f(X_1)-Pf$ is sub-exponential with $\psi_1$-Orlicz norm bounded by $B$ for all $f \in \sF$, then
        \[    \sup_{\lambda \in \R}  \left| \Pr\left( Z_n(\sF) \leq \lambda \right)
  - \Pr \left(Z(\sF) \leq \lambda\right)\right| \leq C \delta_{n,d,B};  \]
        \item if $\Ex{\max_{f \in \sF} |f(X_1)-Pf|^p} \leq B^p$ for some $p \in (2,\infty)$, then
        \[    \sup_{\lambda \in \R}  \left| \Pr \left( Z_n(\sF) \leq \lambda \right)
  - \Pr \left(Z(\sF) \leq \lambda\right)\right| \leq C \left(\delta_{n,d,B} \vee \delta_{n,p,d,B}\right).  \]
    \end{enumerate}
\end{theorem}

Of course, the issue remains in accurately bounding the "is small" term in \eqref{eq:trimmingtruncationapprox}, which will be addressed in the next two lemmas. First, let $\sG$ be a family of functions and $M > 0$, and define
\begin{eqnarray}\label{eq:defcountingclean} 
V_M(\sG) &:=& \sup_{g\in\sG} \sum_{i=1}^n\Ind\{|g(X_i)|>M\},
\end{eqnarray}
which counts the number of large values of $g$ for the worst-case function $g \in \sG$. The following lemma states that if $V_M(\sG)$ is small, then \eqref{eq:trimmingtruncationapprox} is justified.

\begin{lemma}[Bounding lemma \cite{OliveiraResende}]
\label{lem:bounding_simple} Let $\sG$ be a family of functions from $\bX$ to $\R$. 
Also let $t\in\N$ and $M\geq 0$ be such that $V_{M}( \sG ) \leq t$. If $\phi$ satisfies $\phi n \in \N$ and
\[\frac{\lfloor \varepsilon n\rfloor + t}{n}\leq \phi <\frac{1}{2},\]
then
\[
    \sup_{g \in \sG}\, \left|\Tmhat_{n,\phi n}^{\varepsilon}\left(g\right) - \Pmhat_n\left(\tau_{M}\circ g\right)\right| \leq  6\phi M.
\]
\end{lemma}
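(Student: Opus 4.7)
The plan is to prove the pointwise inequality
\[
\bigl|\Tmhat^\eps_{n,\phi n}(g) - \Pmhat_n(\tau_M\circ g)\bigr|\;\leq\;6\phi M
\]
for each fixed $g\in\sG$ (the supremum is then immediate) by a triangle inequality through the intermediate quantity $\Pmhat^\eps_n(\tau_M\circ g):=\tfrac{1}{n}\sum_{i=1}^n\tau_M(g(X^\eps_i))$, the empirical mean of the truncated function on the \emph{contaminated} sample. This splits the target into three manageable pieces: (a) the identity $\Tmhat^\eps_{n,k}(g)=\Tmhat^\eps_{n,k}(\tau_M\circ g)$; (b) a trimmed-vs-empirical comparison for the bounded variable $\tau_M\circ g$ on the same sample; (c) a clean-vs-contaminated comparison of empirical averages of a bounded variable.

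Step (a) is the only place the structural hypothesis is used, and it is the crux. The bound $V_M(\sG)\leq t$ gives $\bigl|\{i\in[n]:|g(X_i)|>M\}\bigr|\leq t$ for every $g\in\sG$, and contamination alters at most $\lfloor\eps n\rfloor$ coordinates, so $\bigl|\{i:g(X^\eps_i)>M\}\bigr|$ and $\bigl|\{i:g(X^\eps_i)<-M\}\bigr|$ are each at most $\lfloor\eps n\rfloor+t\leq\phi n=k$. Consequently $g(X^\eps_{(k+1)})\geq -M$ and $g(X^\eps_{(n-k)})\leq M$, so every order statistic appearing in $\Tmhat^\eps_{n,k}(g)$ already lies in $[-M,M]$, where $\tau_M$ acts as the identity. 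Monotonicity of $\tau_M$ then ensures the $g$-sort and the $(\tau_M\circ g)$-sort yield the same middle multiset, giving (a).

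For (b), write $h=\tau_M\circ g$, so $|h|\leq M$. The elementary identity
\[
\Tmhat^\eps_{n,k}(h) - \Pmhat^\eps_n(h) \;=\; \frac{2k}{n}\,\Tmhat^\eps_{n,k}(h) \;-\; \frac{1}{n}\Bigl(\sum_{i=1}^{k} h(X^\eps_{(i)}) + \sum_{i=n-k+1}^{n} h(X^\eps_{(i)})\Bigr),
\]
combined with $|\Tmhat^\eps_{n,k}(h)|\leq M$ and $|h|\leq M$, yields $|\Tmhat^\eps_{n,k}(h) - \Pmhat^\eps_n(h)| \leq \tfrac{4kM}{n} = 4\phi M$. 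For (c), the clean and contaminated samples differ in at most $\lfloor\eps n\rfloor$ coordinates, each contributing at most $2M$ to the averaged difference, and the standing assumption $\lfloor\eps n\rfloor+t\leq\phi n$ gives $|\Pmhat^\eps_n(h) - \Pmhat_n(h)| \leq \tfrac{2\lfloor\eps n\rfloor M}{n} \leq 2\phi M$. Adding (b) and (c) produces the pointwise bound $6\phi M$, which survives the supremum over $g\in\sG$. The main obstacle is step (a): one must exploit $V_M(\sG)\leq t$ together with the contamination budget to certify, for every $g$ simultaneously, that trimming $k=\phi n$ values from each tail absorbs both the native $|g|>M$ excursions and all contamination — this is precisely the role of the condition $\phi\geq (\lfloor\eps n\rfloor+t)/n$, and the remaining constants are book-kept so that $4\phi M+2\phi M=6\phi M$.
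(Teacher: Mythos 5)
Your proof is correct and follows essentially the same route as the original argument in \cite{OliveiraResende} (not reproduced in this paper, but mirrored in the proof of Lemma~\ref{lemma:gaussian_bounding}, which the text explicitly calls analogous): first exploit $V_M(\sG)\le t$ together with the contamination budget to conclude that the contaminated trimmed mean of $g$ coincides with that of $\tau_M\circ g$, then compare trimmed to full and contaminated to clean averages of the bounded function $\tau_M\circ g$. The constant bookkeeping $4\phi M + 2\phi M = 6\phi M$ also matches.
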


Finally, we need to ensure that $V_M(\sG) \leq t$ with high probability. This is the purpose of the so-called ``counting lemma'' from \cite{OliveiraResende}. The original lemma in \cite{OliveiraResende} is somewhat convoluted and presents its results in terms of the Rademacher complexity of $\sF$. We here introduce the following simplified version of the original ``counting lemma'' from \cite{OliveiraResende}:

\begin{lemma}[Finite-dimensional counting lemma] \label{lemma:counting}
Let $d := |\sG| < \infty$, $M>0$ and $p \geq \Pr(|g(X_1)| > M)$ for all $g\in\sG$. For all $s \geq 10$,
\[ \Pr\left( V_M(\sG) \geq (1+s)np \right) \leq d e^{- np \frac{s \ln s}{2}}. \]
In particular, if $\nu_p(\sG, P) < \infty$,
\[ \Pr\left( V_M(\sG) \geq (1+s)n\frac{\nu_p^p}{M^p} \right) \leq d e^{- n\frac{\nu_p^p}{M^p} \frac{s \ln s}{2}}. \]
\end{lemma}
\begin{proof} Let $p_g = \Pr\left( |g(X)| > M \right)$. By Bennett's inequality for Bernoulli random variables,
\[ \Pr\left( \sum_{i=1}^n\Ind\{|g(X_i)|>M\} - np_g > snp_g \right) \leq s^{-\frac{snp_g}{2}}. \]
Markov's inequality yields $p_g \leq \frac{\nu_p^p}{M^p}$ for all $g \in \sG$. The result follows taking a union bound.
\end{proof}

\subsection{Proof of Gaussian approximation for finite-dimensional classes}\label{subsec:proof_gaussian}

\begin{proof}[Proof of Theorem \ref{thm:gaussian_approximation}] Since the trimmed mean is invariant by translation we assume, without loss of generality, that $Pf = 0$ for every $f \in \sF$, if it is not the case one can simply center the class $\sF$ and work with the centered class. We split the proof of Theorem \ref{thm:gaussian_approximation} in six steps:
\begin{enumerate}
\item \textit{Counting and bounding.} Taking $s = 10$ and $t = (1+s)n\frac{\nu_p^p}{M^p}$, Lemma \ref{lemma:counting} yields $V_M(\sF) \leq  t$ with high probability. Set
\[ k := \phi n \text{ where }\phi = \frac{\lfloor \eps n \rfloor+t}{n}, \]
we proceed using Lemma \ref{lem:bounding_simple} to obtain, for all $\lambda \in \R$,
\[ \Pr \left( Z_{n,k}^\eps(\sF) \leq \lambda \right) \leq \Pr\left( \sup_{f\in \sF} \frac{1}{\sqrt{n}}\sum_{i=1}^n \tau_M(f(X_i)) \leq \lambda + 6\phi M\sqrt{n} \right) + d e^{- 11n\frac{\nu_p^p}{M^p}}. \]

\item \textit{Approximate the truncated process.} We are almost ready to approximate $Z_n(\tau_M \circ \sF)$ by $Z(\tau_M \circ \sF)$ via Theorem \ref{thm:ck_gaussian_approximation}. Centering the class $\tau_M \circ \sF$ yields
\[ \Pr \left( Z_{n,k}^\eps(\sF) \leq \lambda \right) \leq \Pr\left( Z_n(\tau_M\circ \sF) \leq \lambda + \left(6\phi M + \sup_{f \in \sF} P(\tau_M \circ f )\right) \sqrt{n} \right) + d e^{- 11n\frac{\nu_p^p}{M^p}}. \]

Now, Theorem \ref{thm:ck_gaussian_approximation} yields, for all $\lambda \in \R$,
\begin{align*}
    \Pr \left( Z_{n,k}^\eps(\sF) \leq \lambda \right) \leq & \Pr\left( Z(\tau_M\circ \sF) \leq \lambda + \left(6\phi M + \sup_{f \in \sF} P(\tau_M \circ f )\right) \sqrt{n}  \right)\\
    &\, + C_1 \left(\frac{M^2 \ln^5(nd) }{n}\right)^\frac{1}{4} + d e^{- 11n\frac{\nu_p^p}{M^p}}
\end{align*}
where the constant $C_1$ depends on $\nu_2(\tau_M\circ \sF, P) \leq \nu_2(\sF, P)$ and on $\underline{\sigma}_{\tau_M\circ \sF, P}$. As will become clear in the end of the proof, \eqref{eq:condition_b1} implies $\underline{\sigma}^2_{\tau_M\circ \sF, P} \geq \frac{1}{2}\underline{\sigma}^2_{\sF, P}$ and we can say that $C_1$ depends only on $\nu_2(\sF, P)$ and $\underline{\sigma}_{\sF, P}$.

\item \textit{Use a Gaussian comparison inequality.} We now use the following lemma:
\begin{lemma}[Proposition 2.1 of \cite{chernozhuokov2022improved}]\label{lemma:gtog} Let $P,P'$ be distributions over $\bX$ and $\sG, \sG'$ be classes of real-valued functions. Let $d:=|\sG|=|\sG'|<\infty$. Assume $\underline{\sigma}_{\sG, P} > 0$, then
\[ \sup_{ \lambda \in \R } \left| \Pr\left(  Z(\sG, P) \leq \lambda \right) - \Pr\left( Z(\sG', P') \leq \lambda \right) \right| \leq C (\ln d) \sqrt{ \|\Sigma_{\sG,P}-\Sigma_{\sG',P'}\|_\infty } \]
for some constant $C$ depending only on $\underline{\sigma}_{\sG, P}$.
\end{lemma}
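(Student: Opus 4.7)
The plan is to establish this Gaussian--Gaussian comparison by combining Stein/Slepian interpolation on a soft-max smoothing of the supremum with the Chernozhukov--Chetverikov--Kato anti-concentration inequality. Enumerate $\sG = \{g_1, \dots, g_d\}$ and identify $\{G_P(g)\}_{g \in \sG}$ with a centered Gaussian vector $Y \in \R^d$ of covariance $\Sigma := \Sigma_{\sG, P}$; through $\pi$, let $Y' \in \R^d$ be a centered Gaussian vector of covariance $\Sigma'$ with $\Sigma'_{jk} := \Sigma_{\sG', P'}(\pi(g_j), \pi(g_k))$, so that the entrywise sup-norm $\max_{j,k}|\Sigma_{jk} - \Sigma'_{jk}| \leq \Delta := \Delta_\pi(\Sigma_{\sG, P}, \Sigma_{\sG', P'})$. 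The main task reduces to bounding $\sup_\lambda |\Pr(\max_j Y_j \leq \lambda) - \Pr(\max_j Y'_j \leq \lambda)|$; passing from $\max_j Y'_j$ to $Z(\sG', P')$ when $\pi$ is not surjective is handled by approximating $\sG'$ with finite subsets containing $\pi(\sG)$, using the countable separable family of Assumption~\ref{assump:sFP} and a monotone limit.

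The smoothing device is the soft-max $F_\beta(x) := \beta^{-1} \log \sum_{j=1}^d e^{\beta x_j}$, which satisfies $0 \leq F_\beta(x) - \max_j x_j \leq \beta^{-1}\log d$, has $\sum_j \partial_j F_\beta = 1$ with $\partial_j F_\beta \geq 0$, and $\sum_{j,k}|\partial^2_{jk} F_\beta| \leq 2\beta$. Fixing a $C^2$ mollified indicator $\phi\colon \R \to [0,1]$ with $\phi \equiv 1$ on $(-\infty, 0]$, $\phi \equiv 0$ on $[h, \infty)$, $\|\phi'\|_\infty \leq 2/h$, and $\|\phi''\|_\infty \leq C_0/h^2$, I would apply Stein's Gaussian integration by parts along the interpolation $Y_t := \sqrt{t}\, Y + \sqrt{1-t}\, Y'$. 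Combined with the chain-rule bound $\sum_{j,k}|\partial^2_{jk}(\phi \circ F_\beta)| \leq \|\phi''\|_\infty(\sum_j \partial_j F_\beta)^2 + \|\phi'\|_\infty \sum_{j,k}|\partial^2_{jk} F_\beta|$, this yields the smooth comparison
\[ \bigl|\Ex\,\phi(F_\beta(Y) - a) - \Ex\,\phi(F_\beta(Y') - a)\bigr| \leq \tfrac{\Delta}{2}\bigl(\|\phi''\|_\infty + 2\beta\|\phi'\|_\infty\bigr) \leq C_1 \Delta \bigl(h^{-2} + \beta h^{-1}\bigr) \]
uniformly in $a \in \R$.

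To convert into Kolmogorov distance, I would sandwich $\mathbf{1}\{\max_j Y_j \leq \lambda\}$ between translates of $\phi \circ F_\beta$ shifted by $\beta^{-1}\log d$ (and symmetrically for $Y'$), and absorb the $O(h + \beta^{-1}\log d)$ mismatch via the CCK anti-concentration lemma $\sup_{\mu \in \R}\Pr(|\max_j Y_j - \mu| \leq \eta) \leq C \eta \sqrt{\ln d} / \underline{\sigma}_{\sG, P}$, which remains valid for $Y'$ since $\min_j \Sigma'_{jj} \geq \underline{\sigma}_{\sG, P} - \Delta$ (if $\Delta \geq \underline{\sigma}_{\sG, P}/2$ the stated bound is trivial). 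Choosing $\beta \sim (\log d)/h$ so the soft-max gap matches the smoothing width, the total error becomes $O\bigl(\Delta (\log d)/h^2 + h \sqrt{\log d}/ \underline{\sigma}_{\sG, P}\bigr)$.

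The main obstacle is the final optimization over $h$: balancing the two error terms naively yields only the classical CCK rate $(\ln d)^{2/3} \Delta^{1/3}$, whereas the lemma asks for the sharper $(\ln d) \sqrt{\Delta}$. Achieving this refinement, as in \cite{chernozhuokov2022improved}, requires replacing the crude estimate $\|\phi''\|_\infty + 2\beta \|\phi'\|_\infty$ by a finer quadratic form in the covariance differences that exploits both $\sum_j (\partial_j F_\beta)^2 \leq 1$ and the variance lower bound $\underline{\sigma}_{\sG, P}$, effectively saving a factor of $\sqrt{\ln d}$ in the interpolation step. Combined with the same anti-concentration argument, this produces the bound claimed in the statement.
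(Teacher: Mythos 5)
The paper does not prove this lemma; it is quoted verbatim from Proposition 2.1 of \cite{chernozhuokov2022improved}, so there is no "paper's own proof" to compare against. Your proposal therefore has to stand as an independent argument, and as such it has a genuine gap that you yourself flag but do not close.

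The setup (softmax $F_\beta$, Stein interpolation along $Y_t$, the bound $\sum_{j,k}|\partial^2_{jk}(\phi\circ F_\beta)|\le \|\phi''\|_\infty + 2\beta\|\phi'\|_\infty$, Nazarov-type anti-concentration) is the standard machinery and you run it correctly; you also correctly compute that the naive optimization over $h$ with $\beta\sim(\log d)/h$ gives the classical rate $\Delta^{1/3}(\ln d)^{2/3}$. The problem is the last paragraph: your proposed fix is to "save a factor of $\sqrt{\ln d}$ in the interpolation step," but that cannot produce the claimed $\sqrt{\Delta}\,\ln d$. With interpolation error $\Delta\cdot\beta\,h^{-1}\sim\Delta(\log d)h^{-2}$ and anti-concentration cost $h\sqrt{\log d}/\underline{\sigma}$, shaving any power of $\log d$ off the first term still leaves an error of the form $\Delta\,h^{-2}\cdot(\text{polylog}) + h\cdot(\text{polylog})$, and balancing in $h$ is forced to give a $\Delta^{1/3}$ exponent, never $\Delta^{1/2}$. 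The exponent in $\Delta$ is controlled by the $h$-scaling of the interpolation term (quadratic in $h^{-1}$), not by the logarithmic prefactor, so the refinement you gesture at is attacking the wrong quantity. To actually obtain $\sqrt{\Delta}\,\ln d$ one has to change the structure of the interpolation estimate itself — either (a) show that, after integrating Stein's identity over $t$ and exploiting the lower bound $\min_j\Sigma'_{jj}\ge\underline\sigma^2/2$, the effective second-order term scales like $\Delta\,h^{-1}\cdot(\text{polylog})$ rather than $\Delta\,h^{-2}\cdot(\text{polylog})$ (a genuine refinement of the expectation of $\sum_{j,k}|\partial^2_{jk}F_\beta(Y_t)|$ on the relevant window, using anti-concentration \emph{inside} the interpolation integral), or (b) use the self-improvement / recursive argument of \cite{chernozhuokov2022improved}, where the crude $\Delta^{1/3}$ bound is fed back into the comparison to bootstrap it up to $\Delta^{1/2}$. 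As written, your proposal identifies the difficulty but does not supply either mechanism, so it is not a proof of the stated $\Delta^{1/2}$ rate.
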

It yields
\[ \sup_{ \lambda \in \R } \left| \Pr\left(  Z(\sF, P) \leq \lambda \right) - \Pr\left( Z(\tau\circ\sF, P) \leq \lambda \right) \right| \leq C_2 (\ln d) \sqrt{ \|\Sigma_{\sF,P}-\Sigma_{\tau_M\circ\sF,P}) \|_\infty }, \]
where $C_2$ depends only on $\underline{\sigma}_{\sF, P}$. The RHS of the inequality above is bounded using the following lemma:

\begin{lemma}[Covariance bounds]\label{lemma:cov_bound} Let $\sF$ be a $P$-centered family and $M>0$, it holds that
\[ \left\|\Sigma_{\sF,P} - \Sigma_{\tau_M\circ\sF,P}\right\|_\infty \leq 4\nu_{p}^p M^{2-p}. \]
\end{lemma}
\begin{proof}
Given $f,g \in \sF$, we bound
\[ P\left(fg - (\tau_M \circ f)(\tau_M \circ g) \right) + P(\tau_M \circ f) P(\tau_M \circ g). \]
We apply Hölder's inequality to obtain
$P(\tau_M \circ f) = P\left(\tau_M \circ f - f\right) \leq P |f|\mathbf{1}_{|f|>M} \leq \nu_p^p M^{1-p}$. Using Hölder a couple more times yields:
\begin{align*}
     P\left|fg - (\tau_M \circ f)(\tau_M \circ g)\right| &\leq  M P\left|f-\tau_M \circ f\right|\mathbf{1}_{|f| > M} + M P\left|g-\tau_M \circ g\right|\mathbf{1}_{|g| > M}\\
     &\,\,\,\,\,\,+ P\left(|f|\mathbf{1}_{|f| > M} |g|\mathbf{1}_{|g| > M}\right)\\
     & \leq 2\nu_p^p M^{2-p}+ P\left(|f|\mathbf{1}_{|f| > M} |g|\mathbf{1}_{|g| > M}\right)\\
     & \leq 2\nu_p^p M^{2-p}+ \sup_{j\in[d]} P|f|^2\mathbf{1}_{|f| > M}\\
     & \leq 2\nu_p^p M^{2-p}+ \left(\nu_p^p\right)^\frac{2}{p}\left( \frac{\nu_p^p}{M^p} \right)^{1-\frac{2}{p}} \leq 3 \nu_p^p M^{2-p}.
\end{align*}
\end{proof}

We can now bound
\begin{align*}
    &\sup_{\lambda \in \R}\left|\Pr \left( Z_{n,k}^\eps(\sF) \leq \lambda \right) - \Pr\left( Z(\sF) \leq \lambda + \left(6\phi M + \sup_{f \in \sF} P(\tau_M \circ f )\right) \sqrt{n}  \right)\right|\\
    &\leq C_1 \left(\frac{M^2 \ln^5(nd) }{n}\right)^\frac{1}{4} + C_2(\ln d)\sqrt{4\nu_{p}^p M^{2-p}} + d e^{- 11n\frac{\nu_p^p}{M^p}}.
\end{align*}

\item \textit{Anti-concentration.} To get rid of the term $\left(6\phi M + \sup_{f \in \sF} P(\tau_M \circ f )\right) \sqrt{n} $ we bound
\begin{equation}
    \label{eq:boundPtmf}
    \sup_{f \in \sF} P(\tau_M \circ f ) \leq \nu_p^p M^{1-p}
\end{equation}
and make use of a Gaussian anti-concentration inequality. In this case we use Nazarov's inequality, which is due to \cite{nazarov2003maximal} and has a self-contained proof in \cite{chernozhukov2017detailed}:

\begin{lemma}[Nazarov's inequality]\label{lemma:nazarov} Let $\sG$ be a family of $P$-centered functions with $|\sG| = d$. If $\underline{\sigma}_{\sG, P} > 0$, then for all $\delta > 0$,
\[ \sup_{\lambda \in \R}\Pr\left( \lambda \leq Z(\sG) \leq \lambda + \delta \right) \leq \frac{\delta}{ \underline{\sigma}_{\sG, P} }\left(2 + \sqrt{2 \ln d} \right). \]
\end{lemma}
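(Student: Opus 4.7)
The plan is to prove this Gaussian anti-concentration bound via the soft-max smoothing approach originating in Nazarov's work and refined in \cite{chernozhukov2017detailed}. Enumerating $\sG = \{f_1, \ldots, f_d\}$, let $G = (G_P(f_1), \ldots, G_P(f_d))$ be the centered Gaussian vector with covariance $\Sigma$ satisfying $\min_j \Sigma_{jj} \geq \underline{\sigma}_{\sG, P}^2$. For $\beta > 0$ introduce the soft-max
\[
F_\beta(x) \;=\; \beta^{-1}\log \sum_{j=1}^d e^{\beta x_j},
\]
which satisfies $\max_j x_j \leq F_\beta(x) \leq \max_j x_j + \beta^{-1}\log d$. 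Consequently
\[
\Pr\bigl(\lambda \leq Z(\sG) \leq \lambda + \delta\bigr) \;\leq\; \Pr\bigl(\lambda - \beta^{-1}\log d \;\leq\; F_\beta(G) \;\leq\; \lambda + \delta\bigr),
\]
so the task reduces to bounding the density of the smooth random variable $F_\beta(G)$.

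The core step is to show that the density of $F_\beta(G)$ is at most $C(1 + \sqrt{\log d})/\underline{\sigma}_{\sG, P}$ for an absolute constant $C$. The essential structural input is that $\nabla F_\beta(x) = \pi(x)$ lies in the probability simplex (i.e., $\pi_j(x) \geq 0$, $\sum_j \pi_j(x) = 1$). Writing $G = \Sigma^{1/2}\xi$ with $\xi \sim \sN(0, I_d)$, one applies Gaussian integration by parts (Stein's identity) to test functions of the form $\psi(F_\beta(G))$ approximating an indicator on $[\lambda, \lambda + \delta]$. This yields a density control governed by expressions of the type $\langle \pi(G), \Sigma \pi(G) \rangle^{-1/2}$, and a careful analysis exploits the fact that in the regime where $F_\beta(G) \approx \max_j G_j$ the soft-argmax $\pi(G)$ concentrates on the argmax coordinate $j^{\star}$ so that $\langle \pi(G), \Sigma \pi(G)\rangle \approx \Sigma_{j^{\star} j^{\star}} \geq \underline{\sigma}_{\sG, P}^2$.

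Finally, I would optimize $\beta$. Choosing $\beta \sim \sqrt{\log d}/\delta$ balances the soft-max truncation error $\beta^{-1}\log d$ against the integrated density bound over an interval of length $\delta + \beta^{-1}\log d$, giving the stated bound of $\delta(2 + \sqrt{2\log d})/\underline{\sigma}_{\sG, P}$ after tracking constants.

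The main obstacle is the density bound for $F_\beta(G)$: the quantity $\pi(G)^\top \Sigma \pi(G)$ is \emph{not} uniformly bounded below by $\underline{\sigma}_{\sG, P}^2$ (consider highly anti-correlated Gaussian coordinates, where $\pi$ can mix mass in directions of near-zero variance). Turning the simplex structure of $\pi$ and the Gaussian Stein identity into a density bound depending only on the \emph{minimum} diagonal variance — rather than on the full spectrum of $\Sigma$ — is the delicate analytic step, and is precisely why this result is imported as a lemma from \cite{nazarov2003maximal, chernozhukov2017detailed} rather than reproved here.
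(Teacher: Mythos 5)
The paper does not prove this lemma; it imports it from \cite{nazarov2003maximal} and notes that \cite{chernozhukov2017detailed} has a self-contained proof. Your sketch, however, invokes soft-max smoothing, which is \emph{not} the technique used in either reference — and this is not a cosmetic mismatch, because the route you outline does not close. The soft-max $F_\beta$ is the smoothing device used by Chernozhukov, Chetverikov and Kato to prove \emph{comparison} inequalities between CDFs of maxima (via Stein's method and Slepian-type interpolation), but the anti-concentration bound for $\max_j G_j$ itself has a direct proof. The actual argument differentiates the CDF: setting $F(a) := \Pr(G_j \leq y_j + a,\ \forall j)$, one has
\[
F'(a) \;=\; \sum_{j=1}^d \frac{1}{\sigma_j}\,\varphi\!\left(\frac{y_j+a}{\sigma_j}\right)\Pr\bigl(G_i \leq y_i + a,\ i \neq j \ \big|\ G_j = y_j + a\bigr),
\]
and one bounds this sum directly by $(\sqrt{2\ln d}+2)/\underline{\sigma}$ using $\sigma_j \geq \underline{\sigma}$ and a careful case split on the size of $(y_j+a)/\sigma_j$; no smoothing is involved, and the $\sqrt{\ln d}$ emerges from balancing the contribution of terms with moderate $|y_j+a|/\sigma_j$ against the Gaussian decay. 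Your proposed scheme trades that step for a density bound on $F_\beta(G)$, which you would need to control uniformly over $\beta$.

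You in fact identify exactly why that trade fails: $\pi(G)^\top \Sigma\, \pi(G)$ cannot be bounded below by $\underline{\sigma}^2_{\sG,P}$ using only the diagonal, so the Stein/integration-by-parts density control does not yield a bound depending on $\underline{\sigma}_{\sG,P}$ alone. Having correctly diagnosed the obstruction, you then conclude the result must be imported as a lemma — which is the right call and matches what the paper does — but it also means what you have written is a record of an approach that does not reach the stated inequality, not a proof of it. If you want a self-contained argument rather than a citation, the direct CDF-derivative route above is the one to carry out; the soft-max should be abandoned for this particular lemma.
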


Using Lemma \ref{lemma:nazarov} and assuming $n\geq 3$ we bound $2 + \sqrt{2 \ln d} \leq \frac{5}{2}\sqrt{\ln(nd)}$ to get
\begin{align*}
    \varrho \leq & \left(6\phi M + \nu_p^p M^{1-p} \right) \frac{5\sqrt{n\ln(nd)}}{2\underline{\sigma}_{\sF, P}} +  C_1 \left(\frac{M^2 \ln^5(nd) }{n}\right)^\frac{1}{4} + C_2(\ln d)\sqrt{4\nu_{p}^p M^{2-p}} + d e^{- 11n\frac{\nu_p^p}{M^p}}.
\end{align*}

\item \textit{Take $M = \nu_p \left( \frac{\lfloor\eps n \rfloor}{11n} \vee \frac{ \lceil \ln(nd) \rceil }{11n} \right)^{-\frac{1}{p}}$.} It yields $t = \lfloor \eps n \rfloor \vee \lceil \ln(nd) \rceil$ and $d e^{- 11n\frac{\nu_p^p}{M^p}} \leq \frac{1}{n}$. Notice that, up to redefining $C_1$, we can bound $C_1 \left(\frac{M^2 \ln^5(nd) }{n}\right)^\frac{1}{4} \leq C_1 \sqrt{\nu_p} \left( \frac{\ln^{5-\frac{2}{p}}(nd)}{n^{1 - \frac{2}{p}}} \right)^\frac{1}{4}$. Now, consider $\lfloor \eps n \rfloor \leq \lceil \ln(nd) \rceil$ then, for $c_2 \geq \underline{\sigma}_{\sF, P}^{-1}$,
\[ \left(6\phi M + \nu_p^p M^{1-p} \right) \frac{5\sqrt{n\ln(nd)}}{2\underline{\sigma}_{\sF, P}} + C_2(\ln d)\sqrt{4\nu_{p}^p M^{2-p}} \leq c_2 \nu_p \left( \frac{\ln^{3-\frac{2}{p}}(nd)}{n^{1 - \frac{2}{p}}} \right)^\frac{1}{2}. \]
The case $\lfloor \eps n \rfloor \geq \lceil \ln(nd) \rceil$ follows using that $(\ln d)\sqrt{4\nu_{p}^p M^{2-p}} \leq  \sqrt{\eps n \ln(nd)}\sqrt{4\nu_{p}^p M^{2-p}}$ and collecting the terms.

\item \textit{Verify that \eqref{eq:condition_b1} implies $\underline{\sigma}^2_{\tau_M\circ \sF, P} \geq \frac{1}{2}\underline{\sigma}^2_{\sF, P}$.} It follows from Lemma \ref{lemma:cov_bound} and the choice of $M$ that
\[ \underline{\sigma}_{\tau_M\circ \sF, P}^2 \geq \underline{\sigma}_{\sF, P}^2 - 4\nu_p^pM^{2-p} = \underline{\sigma}_{\sF, P}^2 - 4 \nu_p^2 \left( \frac{\lfloor\eps n \rfloor}{11n} \vee \frac{ \lceil \ln(nd) \rceil }{11n} \right)^{1-\frac{2}{p}} \geq \underline{\sigma}_{\sF, P}^2 - 4 \nu_p^2 \left( \frac{k}{n} \right)^{1-\frac{2}{p}}, \]
and so $\underline{\sigma}^2_{\tau_M\circ \sF, P} \geq \frac{1}{2}\underline{\sigma}^2_{\sF, P}$ follows from \eqref{eq:condition_b1}.
\end{enumerate}
\end{proof}

\subsection{Proof of Gaussian approximation for VC classes}\label{subsec:proof_ga_vc}

Before diving into the proof of Theorem \ref{thm:ga_vc} we start with a few useful results.

\begin{lemma}[Talagrand's concentration inequality for VC-type classes, as in Theorem B.1 of \cite{chernozhukov2014anti}] \label{lemma:talagrandvctype} Let $\sG$ be a VC-type class with a constant envelope function $b$ and constants $A \geq e$, $v \geq 1$. Consider also that $\sigma^2$ satisfies $\nu_2^2(\sG) \leq \sigma^2 \leq b^2$. If $b^2 v \ln \frac{Ab}{\sigma} \leq n \sigma^2$, then for all $t \leq \frac{n \sigma^2}{b^2}$,
\[ \Pr\left( \sup_{g \in \sG} \left| \frac{1}{n}\sum_{i=1}^n g(X_i)- Pg \right| \geq C \sqrt{\frac{\sigma^2}{n} \left( t \vee \left(v \ln \frac{Ab}{\sigma}\right) \right)} \right) \leq e^{-t}, \]
where $C>0$ is an absolute constant.
\end{lemma}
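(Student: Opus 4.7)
The plan is to combine an \emph{expected supremum} bound for the empirical process indexed by $\sG$ with a sharp form of Talagrand's concentration inequality for bounded empirical processes. The VC-type hypothesis gives entropy control, which produces the "$v\ln(Ab/\sigma)$" factor, while Talagrand's inequality produces the "$t$" factor; the hypothesis $b^2 v\ln(Ab/\sigma)\le n\sigma^2$ and the restriction $t\le n\sigma^2/b^2$ then collapse the three-term bound into the clean "$t\vee v\ln(Ab/\sigma)$" maximum stated in the lemma.

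First I would bound $\Ex\sup_{g\in\sG}|\tfrac{1}{n}\sum_{i=1}^n g(X_i)-Pg|$ using the entropy integral estimate for VC-type classes already recorded as Lemma \ref{lemma:empvctype} (Corollary~5.1 of Chernozhukov--Chetverikov--Kato). With envelope $G\equiv b$ and $\nu_2^2(\sG)\le\sigma^2$, that lemma yields
\[
\Ex\sup_{g\in\sG}\left|\tfrac{1}{n}\sum_{i=1}^n g(X_i)-Pg\right| \;\le\; C_1\left\{\sqrt{\tfrac{\sigma^2 v}{n}\ln\tfrac{Ab}{\sigma}}+\tfrac{bv}{n}\ln\tfrac{Ab}{\sigma}\right\}.
\]
The hypothesis $b^2v\ln(Ab/\sigma)\le n\sigma^2$ says that $\tfrac{bv}{n}\ln(Ab/\sigma)\le\sqrt{\sigma^2 v\ln(Ab/\sigma)/n}$, so the first term dominates and the expectation is at most $C_1'\sqrt{(\sigma^2/n)\,v\ln(Ab/\sigma)}$.

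Next I would invoke Talagrand's concentration inequality in its Bousquet/Klein--Rio form (for classes bounded by $b$ with pointwise variance at most $\sigma^2$; see e.g.\ Bousquet 2002 or Theorem~12.5 of Boucheron--Lugosi--Massart). This gives, for every $u>0$,
\[
\Pr\!\left(\sup_{g\in\sG}\left|\tfrac{1}{n}\sum_{i=1}^n g(X_i)-Pg\right|\ge\Ex\sup_{g\in\sG}\left|\tfrac{1}{n}\sum_{i=1}^n g(X_i)-Pg\right|+c_1\sqrt{\tfrac{\sigma^2 u}{n}}+c_2\tfrac{bu}{n}\right)\le e^{-u}.
\]
Set $u=t$. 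The constraint $t\le n\sigma^2/b^2$ rearranges to $\tfrac{bt}{n}\le\sqrt{\sigma^2 t/n}$, so the $\tfrac{bt}{n}$ summand is absorbed into the $\sqrt{\sigma^2 t/n}$ summand up to a constant. Combining with Step~1 yields, with probability at least $1-e^{-t}$,
\[
\sup_{g\in\sG}\left|\tfrac{1}{n}\sum_{i=1}^n g(X_i)-Pg\right|\;\le\;C_2\sqrt{\tfrac{\sigma^2}{n}}\Bigl(\sqrt{v\ln\tfrac{Ab}{\sigma}}+\sqrt{t}\Bigr).
\]
Using $\sqrt{a}+\sqrt{b}\le\sqrt{2}\,\sqrt{a\vee b}$ rewrites the right-hand side as $C\sqrt{(\sigma^2/n)\,(t\vee v\ln(Ab/\sigma))}$, which is the claimed bound.

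The main obstacle is not any one ingredient but the bookkeeping: one must pick the right version of Talagrand's inequality so that the Gaussian scale truly involves $\sigma^2$ (and not $\sigma^2+$ a correction involving the expected supremum itself), and one must verify that the hypothesis $b^2v\ln(Ab/\sigma)\le n\sigma^2$ is exactly what is needed to discard the linear-in-$b/n$ "Poisson" term from both the expectation bound and Talagrand's bound. Once these are in place the final simplification via $\sqrt{a}+\sqrt{b}\asymp\sqrt{a\vee b}$ and the constraint $t\le n\sigma^2/b^2$ is straightforward.
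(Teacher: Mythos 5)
This lemma is not proved in the paper; it is imported verbatim as Theorem B.1 of \cite{chernozhukov2014anti}, and your argument is essentially a correct reconstruction of how that result is established there: the entropy-integral maximal inequality for VC-type classes plus Talagrand's inequality in Bousquet/Klein--Rio form, with the hypotheses $b^2 v\ln(Ab/\sigma)\le n\sigma^2$ and $t\le n\sigma^2/b^2$ used exactly as you say to absorb the $b/n$-terms. Two small points of bookkeeping, neither fatal: (i) you should apply the maximal inequality with variance parameter $\sigma$ (as in the original Corollary 5.1 of Chernozhukov--Chetverikov--Kato, which allows any $\sigma^2$ with $\nu_2^2(\sG)\le\sigma^2\le b^2$) rather than the paper's restatement with $\nu_2(\sG)$, since otherwise the second term carries a factor $\ln(Ab/\nu_2(\sG))$ that is not controlled when $\nu_2(\sG)\ll\sigma$; (ii) in Bousquet's inequality the variance proxy is $n\sigma^2+2b\,\Ex[\sup_g\sum_i(g(X_i)-Pg)]$, not $n\sigma^2$ alone, but splitting the square root and using $\sqrt{xy}\le(x+y)/2$ merely doubles the expectation term, and the two-sided supremum with absolute values is handled by passing to $\sG\cup(-\sG)$, which is again VC-type with the same envelope and variance bounds; your constant $\sqrt{2}$ in $\sqrt{a}+\sqrt{b}\le 2\sqrt{a\vee b}$ should be $2$, which is immaterial since $C$ is unspecified.
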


We also use the following lemma relating truncated VC-subgraph to VC-type classes:

\begin{lemma}\label{lemma:vcclass}
    Assume $\sF$ is a VC-subgraph class and let $q \geq 1$. Then $\sF^q_M = \{ (\tau_M\circ f)^q : f \in \sF \}$ is VC-type with envelope $M^q$ and constants $(A,v) = (8e, 2\text{vc}(\sF))$.
\end{lemma}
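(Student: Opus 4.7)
The plan is to reduce the lemma to a classical covering-number inequality for VC-subgraph classes (e.g.\ Theorem 2.6.7 of van der Vaart-Wellner or Haussler's bound) by first verifying that $\sF^q_M$ has VC-subgraph dimension at most $2\,\text{vc}(\sF)$, and then exploiting the fact that $\sF^q_M$ has the constant envelope $M^q$, so that $\|M^q\|_{L^2(Q)}=M^q$ for every probability measure $Q$ with finite support.

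For the dimension bound I would first observe that the truncation $\tau_M$ is monotone, hence by Lemma~2.6.18(viii) of van der Vaart-Wellner the truncated class $\tau_M\circ\sF$ is VC-subgraph with index at most $\text{vc}(\sF)$. When $t\mapsto t^q$ is monotone on $\R$ (as for odd integer $q$, or under the convention $t^q:=|t|^{q-1}t$), a second monotone composition preserves the VC-subgraph index, and we are done with $2\,\text{vc}(\sF)$ as a trivial overestimate. For the non-monotone case (even integer $q$, or $t^q:=|t|^q$) I would decompose the subgraph as
\[
\{(x,t):t<(\tau_M f(x))^q\}=\{t<0\}\cup\bigl(\{t\geq 0\}\cap\bigl[\{f(x)>t^{1/q}\}\cup\{f(x)<-t^{1/q}\}\bigr]\bigr),
\]
identify each of the last two pieces with the image under a bijection of $\bX\times\R_{\geq 0}$ of a subgraph of $\sF$ or $-\sF$, and then use the elementary fact that the union of two VC classes with dimensions $v_1,v_2$ has VC dimension at most $v_1+v_2$. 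This yields $\text{vc}(\sF^q_M)\leq 2\,\text{vc}(\sF)$.

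For the covering step, applying the chosen VC-subgraph covering-number inequality to $\sF^q_M$ with envelope $M^q$ and VC-subgraph index bounded by $v:=2\,\text{vc}(\sF)$ gives a bound of the shape $(c/\delta)^{c' v}$, uniformly over probability measures $Q$ with finite support and $\delta\in(0,1]$. The main obstacle — and the only genuinely delicate part of the argument — is the constant bookkeeping: matching the precise values $A=8e$ and $v=2\,\text{vc}(\sF)$ claimed in the statement requires a careful choice of covering-number inequality (a sharpened Haussler-type bound rather than the more generous vdV-W bound), together with absorption of the polynomial prefactor into the exponent via $\delta\leq 1$ and standard inequalities such as $K(v+1)(16e)^{v+1}\leq(8e)^{2v}$ for all $v\geq 1$.
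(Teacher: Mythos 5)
Your proposal diverges from the paper in one important respect, and contains a genuine gap in the non-monotone branch that cannot be patched by constant bookkeeping alone.

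The paper's proof is two lines: it observes (implicitly treating the power $t\mapsto t^q$ as sign-preserving, so that $\tau_M(\cdot)^q$ is a \emph{monotone} scalar map) that $\text{vc}(\sF^q_M)\leq\text{vc}(\sF)$, and then invokes a covering-number theorem (Theorem 5.11 of Zhang's book) whose exponent for the $L^2$ metric is $2d$ in the VC-subgraph dimension $d$; this gives precisely the exponent $2\,\text{vc}(\sF)$ of the lemma. Your monotone branch reproduces this argument. However, you split off a separate branch for the convention $t^q:=|t|^q$ and there you lean on ``the elementary fact that the union of two VC classes with dimensions $v_1,v_2$ has VC dimension at most $v_1+v_2$.'' That statement is not correct. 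What is controlled by $v_1+v_2$ is the exponent of polynomial growth (VC density): $\Pi_{\sC_1\sqcup\sC_2}(n)\leq\Pi_{\sC_1}(n)\Pi_{\sC_2}(n)$ gives growth $O(n^{v_1+v_2})$. The VC \emph{dimension} of the union class can strictly exceed $v_1+v_2$; the general bound obtainable from Sauer--Shelah is of order $(v_1+v_2)\ln(v_1+v_2)$, not $v_1+v_2$. So your claimed bound $\text{vc}(\sF^q_M)\leq 2\,\text{vc}(\sF)$ in that branch is not established. (Incidentally, that the map $f\mapsto|\tau_M f|^q$ genuinely inflates VC-subgraph dimension is easy to see already for $q=1$: taking $\sF=\{x\mapsto a-x:a\in\R\}$ on $\R$, one has $\text{vc}(\sF)=1$ but $\text{vc}(\{|a-\cdot|\})\geq 2$.)

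There is a second, compounding mismatch in your covering step. If you feed the (hoped-for) dimension bound $2\,\text{vc}(\sF)$ into a standard VC-subgraph $L^2$ covering-number inequality, whose exponent is twice the VC-subgraph dimension, you obtain exponent $4\,\text{vc}(\sF)$, not $2\,\text{vc}(\sF)$. No amount of absorbing polynomial prefactors via $\delta\leq1$ can fix a wrong power of $1/\delta$. So even if the union bound were correct, your plan would prove a weaker statement than the lemma. The clean route — which is also what the paper does — is to avoid the non-monotone case altogether: interpret the $q$-th power as the sign-preserving power $\text{sign}(t)|t|^q$ (this is the version actually needed in the application, since only $|g|$ enters the counting argument downstream), so that $\tau_M$ followed by the power is a single monotone transformation of $\R$, hence $\text{vc}(\sF^q_M)\leq\text{vc}(\sF)$ directly, and then apply the covering-number theorem with exponent $2\,\text{vc}(\sF)$. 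Your bookkeeping step is otherwise in the right spirit.
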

\begin{proof}
First notice that $\text{vc}(\sF_M^q) \leq \text{vc}(\sF)$. Then use Theorem 5.11 of \cite{zhang2023mathematical}.
\end{proof}

And the following version of the counting lemma for VC-subgraph classes:

\begin{lemma}[Counting lemma for VC-subgraph classes]\label{lem:vc_counting}
Let $\sG$ be VC-subgraph class with dimension $v < \infty$. Given $M>0$, let $p = \sup_{g \in \sG} \Pr(|g(X_1)| > M)$. Then, for all $s \ge 10$,
\[ \Pr\left( V_M(\sG) > (1+4s)np \right) \leq 4 \left( \frac{en}{v+1} \right)^{2(v+1)} s^{-\frac{snp}{2}}. \]
In particular, if $\nu_p(\sG, P) < \infty$,
\[ \Pr\left( V_M(\sG) > (1+4s)n\frac{\nu_p^p}{M^p} \right) \leq 4 \left( \frac{en}{v+1} \right)^{2(v+1)} e^{- n\frac{\nu_p^p}{M^p} \frac{s \ln s}{2}}. \]
\end{lemma}
\begin{proof} Define the class of indicator functions associated with $\sG$ at threshold $M$:
\[ \sI_M := \left\{ x \mapsto \Ind\{|g(x)| > M\} : g \in \sG \right\} \]
Because $\sG$ is a VC-subgraph class with dimension $v$, the sets defined by $\{x: g(x) > M\}$ and $\{x: g(x) < -M\}$ are VC classes. Since $\sI_M$ represents the union of these sets, it is a VC class with dimension $v' \leq 2(v+1)$ \cite[Lemma 1]{depersin2024robust}.

We wish to bound the supremum of the empirical sum over $\sI_M$. We introduce an independent ghost sample $X'_1, \dots, X'_n$. By the classical symmetrization lemma, for any $t$ such that the variance condition $\sup_{h \in \sI_M} \Pr\left(\sum_{i=1}^n h(X'_i) - n \Ex h > \frac{t}{2}\right) \le \frac{1}{2}$ holds, one can bound \footnote{We verify this variance condition for the choice $t = 4snp$ made below. Since $h \in \sI_M$ is an indicator, $\sum_{i=1}^n h(X'_i)$ is a sum of Bernoulli variables with $n\Ex h \le np$, and by Markov's inequality $\Pr\left(\sum_{i=1}^n h(X'_i) - n\Ex h > \frac{t}{2}\right) \le \frac{n\Ex h}{t/2} \le \frac{np}{2snp} = \frac{1}{2s} \le \frac{1}{2}$ for all $s \ge 1$, so the condition holds.}
\[ \Pr\left( \sup_{h \in \sI_M} \sum_{i=1}^n (h(X_i) - \Ex h) > t \right) \leq 2 \Pr\left( \sup_{h \in \sI_M} \sum_{i=1}^n (h(X_i) - h(X'_i)) > \frac{t}{2} \right). \]

By the Sauer-Shelah lemma, the growth function satisfies $\Pi_{\sI_M}(2n) \leq \left( \frac{2en}{v'} \right)^{v'}$. Since all $h\in \sI_M$ are binary functions, there are at most $\Pi_{\sI_M}(2n)$ different values in the supremum, thus:
\[  \Pr\left( \sup_{h \in \sI_M} \sum_{i=1}^n (h(X_i) - \Ex h) > t \right) \leq 2 \left( \frac{2en}{v'} \right)^{v'} \sup_{h \in \sI_M} \Pr\left( \sum_{i=1}^n (h(X_i) - h(X'_i)) > \frac{t}{2} \right) \]

Taking $t = 4snp$ and using Bennett's inequality as in Lemma \ref{lemma:counting} yields
\[ \Pr\left( \sum_{i=1}^n (h(X_i) - h(X'_i)) > \frac{t}{2} \right) \leq 2 \Pr\left( \sum_{i=1}^n (h(X_i) - \Ex h) > \frac{t}{4} \right) \leq 2 s^{-\frac{snp}{2}}. \]
\end{proof}

\begin{proof}[Proof of Theorem \ref{thm:ga_vc}]
The overall argument is to use a $\delta$-net transforming the infinite-dimensional problem into a finite-dimensional problem and then controlling the errors. As done on the proof of Theorem \ref{thm:gaussian_approximation}, we assume without loss of generality that the class $\sF$ is centered with respect to the measure $P$. Recall that $K_n =  \lceil 12(\text{vc}(\sF) + 1) \ln n \rceil$. We split the proof into five steps:

\begin{enumerate}
    \item \textit{Use the counting and bounding lemmata.} Motivated by the proof of Theorem \ref{thm:gaussian_approximation} we let
    \[ M = \nu_p \left( \frac{\lfloor\eps n \rfloor}{41n} \vee \frac{ K_n }{41n} \right)^{-\frac{1}{p}} \text{ and } t = 41n\frac{\nu_p^p}{M^p} = \lfloor \eps n \rfloor \vee K_n. \]
    Lemma \ref{lem:vc_counting} and our choice of $K_n$ yields
    \[ \Pr\left( V_M(\sF) > 41n\frac{\nu_p^p}{M^p} \right) \leq 4 \left( \frac{en}{v+1} \right)^{2(v+1)} e^{- 11 n\frac{\nu_p^p}{M^p}} \leq \frac{1}{n}. \]
    Using Lemma \ref{lem:bounding_simple} one can take $k = \phi n$, $\phi = \frac{\lfloor \eps n \rfloor + t}{n}$ and bound
\[ \sup_{f \in \sF}\left|\T_{n,k}^\eps(f) - \frac{1}{\sqrt{n}}\sum_{i=1}^n \tau_M(f(X_i)) \right| \leq 6\phi M \sqrt{n}.\]
In addition, centering the class $\tau_M \circ \sF$ yields
\begin{equation}\label{eq:relationtrimtrunc}
\left|Z_{n,k}^\eps(\sF) - Z_n(\tau_M\circ\sF)\right| \leq \left(6\phi M + \nu_p^p M^{1-p} \right) \sqrt{n}.  
\end{equation}

\item \textit{Approximate by a $\delta$-net.} Let $\delta = 2\sqrt{\frac{K_n}{n}}$ and let $\sH_M$ be a $\delta M$-net of $\tau_M \circ \sF$, also let $\sH$ be the corresponding set in $\sF$ (notice that it may not be a $\delta M$-net for $\sF$), i.e.,
\[ \sH_M = \{ \tau_M \circ f :\, f \in \sH \}.   \]
By Lemma \ref{lemma:vcclass} we have $|\sH| = |\sH_M| \leq \left( \frac{8e}{\delta}  \right)^{2v}$. The approximating error induced by the $\delta$-net can be bounded by
\[ Z_n(\tau_M\circ\sF) - Z_n(\tau_M\circ\sH) \leq \sup_{\substack{f,g \in \sF \\ d_P(\tau_M \circ f,\tau_M \circ g) \leq \delta M}} \G_n( \tau_M\circ f-P\tau_M\circ f )-\G_n( \tau_M\circ g - P\tau_M\circ g ). \]
We now use Lemma \ref{lemma:talagrandvctype} with $\sigma = \delta M$ and $b = 2M$. Notice that $\tau_M\circ \sF \times \tau_M\circ \sF$ is VC-type with $A=8e$ and dimension $4v$ and the conditions
\[ 4 v \ln \frac{16e}{\delta} \leq \frac{n \delta^2}{4} \text{ and } \ln n \leq \frac{n\delta^2}{4}\]
hold. Thus, with probability at least $1-\frac{1}{n}$ and for some universal constant $C_1$,
\[ Z_n(\tau_M\circ\sF) - Z_n(\tau_M\circ\sH) \leq C_1 \delta M \sqrt{ \ln n \vee \left( 4v \ln \frac{16e}{\delta} \right) } \leq C_1\delta M \sqrt{K_n}. \]

\item \textit{Approximate $Z_n(\tau_M\circ\sH)$ by $Z(\sH)$.} Apply Theorem \ref{thm:ck_gaussian_approximation}, Lemma \ref{lemma:gtog} and Lemma \ref{lemma:cov_bound} to get:
\[ \sup_{\lambda \in \R} \left|\Pr\left(Z_n(\tau_M\circ\sH) \leq \lambda\right) - \Pr\left(Z(\sH) \leq \lambda\right) \right| \leq C_2 \left( \frac{M^2 K_n^5 }{n} \right)^\frac{1}{4} + C_3 K_n(\sF) \sqrt{\nu_{p}^p M^{2-p}} \]
for a constant $C_2$ that depends on $\underline{\sigma}_{\sF, P}$ and $\nu_2(\sF,P)$ and a constant $C_3$ that depends only on $\underline{\sigma}_{\sF, P}$. Notice that here we used condition \eqref{eq:condition_b1} to bound $\underline{\sigma}^2_{\tau_M \circ \sF, P} \geq \frac{1}{2} \underline{\sigma}^2_{\sF, P}$.

\item \textit{Approximate $Z_{n,k}^\eps(\sF)$ by $Z(\sF)$.} By the second and third steps together with Nazarov's inequality (Lemma \ref{lemma:nazarov}) we also have
\begin{align*}
    \sup_{\lambda \in \R} \left| \Pr\left( Z_{n,k}^\eps(\sF) \leq \lambda \right) - \Pr\left( Z(\sH) \leq \lambda \right) \right| \leq & \left((6\phi M + \nu_p^p M^{1-p})\sqrt{n} + C_1 M \delta \sqrt{K_n} \right) \frac{2 + \sqrt{2 K_n}}{\underline{\sigma}_{\sF, P}}\\
    & \, + C_2 \left( \frac{M^2 K_n^5 }{n} \right)^\frac{1}{4} + C_3 K_n \sqrt{\nu_{p}^p M^{2-p}} + \frac{1}{n}
\end{align*}

On the other hand, if $d_P(\tau_M \circ f, \tau_M \circ g) \leq \delta M$, then
\[ P(f-g)^2 = Pf^2 - 2Pfg + Pg^2 \leq d_P(\tau_M \circ f, \tau_M \circ g)^2 + 16 \nu_{p}^p M^{2-p} \leq \delta^2M^2 + 16 \nu_{p}^p M^{2-p}. \]

Thus the $d_P$ distance between a point $f \in \sF$ and its closest point $g \in \sH$ is at most $\sqrt{\delta^2 M^2 + 16 \nu_{p}^p M^{2-p}}$. Borell-TIS inequality gives
\[ \Pr\left( Z(\sF) - Z(\sH) \leq \Xi\left(\sqrt{ \delta^2M^2 + 16 \nu_{p}^p M^{2-p}}\right) + \sqrt{2 \left( \delta^2M^2 + 16 \nu_{p}^p M^{2-p}\right) \ln n } \right) \geq 1 - \frac{1}{n}. \]

Using Nazarov's inequality again and $2 + \sqrt{2 K_n} \leq \frac{5}{2} \sqrt{K_n}$ yields (up to redefining $C_1$)
\begin{align*}
    \sup_{\lambda \in \R} \left| \Pr\left( Z_{n,k}^\eps(\sF) \leq \lambda \right) - \Pr\left( Z(\sF) \leq \lambda \right) \right| \leq & \frac{5\sqrt{n K_n}}{2\underline{\sigma}_{\sF, P}}\left(6\phi M + \nu_p^p M^{1-p}\right) + C_1 K_n M\delta \\
    & \, + \frac{5\sqrt{K_n}}{2\underline{\sigma}_{\sF, P}}\,\Xi\left(\sqrt{ \delta^2 M^2 + 16 \nu_{p}^p M^{2-p}}\right) \\
    & \, + C_2 \left( \frac{M^2 K_n^5 }{n} \right)^\frac{1}{4} + C_3 K_n \sqrt{\nu_{p}^p M^{2-p}} + \frac{2}{n}
\end{align*}

\item \textit{Use the definitions of $M$ and $\delta$.} Start observing that $M =  \nu_p \left(\frac{t}{41n}\right)^{-\frac{1}{p}} \leq \nu_p \left(\frac{K_n}{41n}\right)^{-\frac{1}{p}}$ and so, up to redefining $C_1,C_2$, we have $C_1 K_n M\delta + C_2 \left( \frac{M^2 K_n^5 }{n} \right)^\frac{1}{4} \leq C_1\nu_p \left( \frac{K_n^{3-\frac{2}{p}}}{n^{1-\frac{2}{p}}} \right)^\frac{1}{2} + C_2 \sqrt{\nu_p} \left( \frac{K_n^{5-\frac{2}{p}}}{n^{1-\frac{2}{p}}} \right)^\frac{1}{4}$. If $\lfloor \eps n \rfloor \leq K_n$,
\begin{align*}
     & \frac{5\sqrt{n K_n}}{2\underline{\sigma}_{\sF, P}}\left(6\phi M + \nu_p^p M^{1-p}\right) + \frac{5\sqrt{K_n}}{2\underline{\sigma}_{\sF, P}}\,\Xi\left(\sqrt{ \delta^2 M^2 + 16 \nu_{p}^p M^{2-p}}\right) + C_3 K_n \sqrt{\nu_{p}^p M^{2-p}}\\ 
     \leq & \frac{5\sqrt{n K_n}}{2\underline{\sigma}_{\sF, P}}\left(12\frac{K_n}{n}M + \nu_p^p M^{1-p}\right) + \frac{5\sqrt{K_n}}{2\underline{\sigma}_{\sF, P}}\,\Xi\left(\sqrt{ 4\frac{K_n}{n} M^2 + 16 \nu_{p}^p M^{2-p}}\right) + C_3 K_n \sqrt{\nu_{p}^p M^{2-p}}\\ 
     \leq & \left( \frac{200}{\underline{\sigma}_{\sF, P}} + C_3 \right) \nu_p \left( \frac{K_n^{3-\frac{2}{p}}}{n^{1-\frac{2}{p}}} \right)^\frac{1}{2}   + \frac{5\sqrt{K_n}}{2\underline{\sigma}_{\sF, P}}\,\Xi\left(14 \nu_p \left( \frac{K_n}{n}\right)^{\frac{1}{2}-\frac{1}{p}}\right).
\end{align*}
If $\lfloor \eps n \rfloor \geq K_n$,
\begin{align*}
     & \frac{5\sqrt{n K_n}}{2\underline{\sigma}_{\sF, P}}\left(6\phi M + \nu_p^p M^{1-p}\right) + \frac{5\sqrt{K_n}}{2\underline{\sigma}_{\sF, P}}\,\Xi\left(\sqrt{ \delta^2 M^2 + 16 \nu_{p}^p M^{2-p}}\right) + C_3 K_n \sqrt{\nu_{p}^p M^{2-p}}\\ 
     \leq & \frac{5\sqrt{n K_n}}{2\underline{\sigma}_{\sF, P}}\left(12\eps M + \nu_p^p M^{1-p}\right) + \frac{5\sqrt{K_n}}{2\underline{\sigma}_{\sF, P}}\,\Xi\left(\sqrt{ 4\eps M^2 + 16 \nu_{p}^p M^{2-p}}\right) + C_3 \sqrt{\eps n K_n} \sqrt{\nu_{p}^p M^{2-p}}\\ 
     \leq & \left( \frac{200}{\underline{\sigma}_{\sF, P}} + C_3 \right) \nu_p \eps^{1-\frac{1}{p}}\sqrt{n K_n } + \frac{5\sqrt{K_n}}{2\underline{\sigma}_{\sF, P}}\,\Xi\left(14 \nu_p \eps^{\frac{1}{2}-\frac{1}{p}}\right).
\end{align*}
The final bound follows combining both cases and doing some overestimates.
\end{enumerate}
\end{proof}

\begin{remark}\label{rem:replacesigmazero} Keep assuming, without loss of generality, that $\sF$ is centered. It is possible to refine \eqref{eq:relationtrimtrunc} in the scenario where exists a symmetric set $\sG$ such that for all $f \in \sF$ either $f(X) = 0$ a.s. or $f(X) = g(X)$ a.s. for some $g \in \sG$. In this case we have
\begin{align*}
     Z_n(\tau_M\circ\sF) &= \sup_{f\in \sF}\frac{1}{\sqrt{n}}\sum_{i=1}^n \tau_M(f(X_i)) - P\tau_M\circ f \\ &= \sup_{g\in \sG}\frac{1}{\sqrt{n}}\sum_{i=1}^n \tau_M(g(X_i)) - P\tau_M\circ g \\ &=  Z_n(\tau_M\circ\sG).
\end{align*}
The proof follows replacing the family $\sF$ by the family $\sG$ in steps following \ref{eq:relationtrimtrunc}. In particular, the occurrences of the term $\underline{\sigma}_{\sF, P}$ can be replaced by $\underline{\sigma}_{\sG, P}$. This is specially useful in the context of vector mean estimation under general norms since if $X - \mu_P \in H^\perp$ a.s. we can replace (the uncentered) $\sF = \{ \langle \cdot, v \rangle : v \in S \}$ by (the uncentered) $\sG = \{ \langle \cdot, v \rangle : v \in S \cap H^\perp \}$. Notice that the centered versions of these classes satisfy
\[ \langle X, v \rangle - \langle \mu_P, v \rangle = \langle X - \mu_P, v \rangle = \langle X - \mu_P, \textrm{proj}_{H^\perp}(v) \rangle \]
and so either $\langle X - \mu_P, v \rangle = 0$ a.s. when $v \in H$ or $\langle X - \mu_P, v \rangle = \langle X - \mu_P, \textrm{proj}_{H^\perp}(v) \rangle$ a.s. when $v \not\in H$. Therefore, taking
\[ H = \left\{ v \in S : v^T\Gamma v = 0 \right\} \]
let us replace the dependence on  $\underline{\sigma}_{\sF, P}$ for a dependence on $\underline{\sigma}_{\sG, P}$, which is the smallest non-zero eigenvalue of $\Gamma$.
\end{remark}

\section*{Acknowledgments}
The author is very grateful to Roberto I. Oliveira for literature recommendations and several helpful discussions. He also thanks Guillaume Lecué for carefully reviewing the manuscript. The author used Cursor (with the GLM 5.2 and composer models) to review proofs and writing, mainly for keeping track of constants. It was also used to help generate code for the numerical experiments.

\section*{Funding}
The author was supported by Conselho Nacional de Desenvolvimento Científico e Tecnológico (CNPq).

\section{Supplementary Material}

\subsection{Proof of the upper bound on the contamination level}\label{sm:proof_epsbound}

\begin{proof}[Proof of Lemma \ref{lemma:epsbound}]
    Let $Y$ be a random vector with $d$ i.i.d. components with mean zero, variance $1$ and $L^p$-norm $\left(\Ex|Y_1|^p\right)^{1/p} = \frac{b_p}{\sigma}$. Let $B$ be an independent Bernoulli with probability $\frac{\eps}{2}$. Take $M\geq 1$ and define $P, P'$ as the distributions of
    \[ X = \sigma Y\quad\text{ and }\quad X' = (1-B)\sigma Y + B \sigma M \mathbf{1}_d. \]
    Notice that if $ \sum_{i=1}^n B_i \leq \eps n$ one can contaminate samples from $P'$ and $P$ to be indistinguishable. The probability that the two samples can be coupled in this way is bounded below using Chernoff's bound
    \[ \Pr\left( \sum_{i=1}^n B_i > \eps n \right) \leq e^{-\frac{\eps n}{6}}. \]
    
    We also have, writing $q := \frac{\eps}{2}$ for the mixing weight,
    \[ \Ex X = 0,\quad \Ex X' = q \sigma M\mathbf{1}_d,\quad \text{Cov}(X) = \sigma^2 I_d,\quad\text{and}\quad\text{Cov}(X') = (1-q) \sigma^2 I_d + q(1-q) \sigma^2 M^2 \mathbf{1}\mathbf{1}^T. \]

    Let $d_{KS}$ be the Kolmogorov-Smirnov distance. It holds that
    \begin{align*}
        & d_{KS}\left( \sqrt{n} \sup_f \widehat{E}_f^\eps(X_{1:n}^{\eps}) , \sup_f G_P(f) \right) + d_{KS}\left( \sqrt{n} \sup_f \left\{\widehat{E}_f^\eps(X_{1:n}^{'\eps}) - q \sigma M \right\} , \sup_f G_{P'}(f) \right)\\
        \geq & d_{KS}\left( \sqrt{n} \sup_f \widehat{E}_f^\eps(X_{1:n}^{\eps}) , \sup_f G_P(f) \right) + d_{KS}\left(  \sqrt{n} \sup_f \widehat{E}_f^\eps(X_{1:n}^{\eps}) , \sup_f G_{P'}(f) + \sqrt{n} q \sigma M \right) - e^{-\frac{\eps n}{6}}\\
        \geq & d_{KS}\left( \sup_f G_P(f) , \sup_f G_P(f) + \sqrt{n} q \sigma M \right) - d_{KS}\left( \sup_f G_P(f) , \sup_f G_{P'}(f) \right) - e^{-\frac{\eps n}{6}}\\
        \geq & d_{KS}\left( \sup_f G_P(f) , \sup_f G_P(f) + \sqrt{n} q \sigma M \right) - C(\sigma) \ln d \sqrt{\eps} \sigma M - e^{-\frac{\eps n}{6}},
    \end{align*}
    where in the last line we approximate $G_{P'}$ by $G_P$ using Lemma \ref{lemma:gtog}. Since $\text{Cov}(X') - \text{Cov}(X) = -q \sigma^2 I_d + q(1-q) \sigma^2 M^2 \mathbf{1}\mathbf{1}^T$ and each component of this matrix can be bounded in absolute value by $\eps \sigma^2 M^2$ (recall that $M \geq 1$, $q = \frac{\eps}{2}$ and $\eps < \frac{1}{2}$), the last inequality holds for some constant $C(\sigma)>0$ depending only on $\sigma$.

    Assume that $q M \geq \frac{\delta}{\sigma} (2 n \ln d)^{-\frac{1}{2}}$, as will be verified below. Since the Kolmogorov--Smirnov distance is non-increasing in the size of the shift between its two arguments,
    \begin{align*}
        d_{KS}\left( \sup_f G_P(f) , \sup_f G_P(f) + \sqrt{n} q \sigma M \right) & \geq  d_{KS}\left( \sqrt{2\ln d} \sup_f G_P(f) , \sqrt{2\ln d} \sup_f G_P(f) + \delta \right) \\
        & = \sup_{\lambda \in \R} \Pr\left( \lambda \leq \sqrt{2\ln d} \sup_f G_P(f) \leq \lambda + \delta \right).
    \end{align*}

    Recall that, by construction, $\sF$ is the family of the $d$ coordinate projections and the marginals of $P$ are i.i.d., so $\sup_f G_P(f)$ is the maximum of $d$ i.i.d.\ standard Gaussians. By the classical extreme-value theorem, $\sqrt{2\ln d}\,\sup_f G_P(f) - b_d$ converges in distribution to a standard Gumbel law $G$, where $b_d$ is the usual centering sequence. Moreover, since convergence in distribution to a continuous distribution on the line is uniform, there is a sequence $\eta_d \to 0$ such that, uniformly over $\lambda \in \R$,
    \[ \left| \Pr\left( \lambda \leq \sqrt{2\ln d}\,\sup_f G_P(f) \leq \lambda + \delta \right) - \Pr\left( \lambda \leq G \leq \lambda + \delta \right) \right| \leq \eta_d. \]
    Taking the supremum over $\lambda$ gives
    \[ \sup_{\lambda \in \R} \Pr\left( \lambda \leq \sqrt{2\ln d}\,\sup_f G_P(f) \leq \lambda + \delta \right) \geq \sup_{\lambda \in \R} \Pr\left( \lambda \leq G \leq \lambda + \delta \right) - \eta_d. \]
    Let $x(\delta) := \sup_{\lambda \in \R} \Pr\left( \lambda \leq G \leq \lambda + \delta \right) > 0$. Since $\eta_d \to 0$,
    \begin{align*}
        \liminf_{d\to\infty}\, d_{KS}\left( \sup_f G_P(f) , \sup_f G_P(f) + \sqrt{n} q \sigma M \right) \geq x(\delta) > 0.
    \end{align*}

    We now select $M$ so that $\nu_p^p(\sF, P)$ and $\nu_p^p(\sF, P')$ have the same order. Clearly $\nu_p^p(\sF, P) = b_p^p$. Taking $M = \frac{b_p}{\sigma} \eps^{-\frac{1}{p}} \geq 1$ yields
    \[ b_p^p \leq \nu_p^p(\sF, P') = (1-q) b_p^p + q (\sigma M)^p \leq 2b_p^p \]
    and 
    \[ \sigma^2 \leq \nu_2^2(\sF, P') = (1-q) \sigma^2 + q \sigma^2 M^2 \leq \sigma^2( 1 + q M^2 ) \leq 2\sigma^2 \]
    where we used that $q M^2 \leq \eps M^2 \leq 1$ (as required below).

    By the definition of $M$, $q M \geq \frac{\delta}{\sigma} (2 n \ln d)^{-\frac{1}{2}}$ holds as long as $\eps^{1-\frac{1}{p}} \geq \frac{2 \delta}{b_p}(2n \ln d)^{-\frac{1}{2}}$, which is an assumption. For large enough $d$,
    \begin{align*}
        & d_{KS}\left( \sqrt{n} \sup_f \widehat{E}_f^\eps(X_{1:n}^{\eps}) , \sup_f G_P(f) \right) + d_{KS}\left( \sqrt{n} \sup_f \left\{\widehat{E}_f^\eps(X_{1:n}^{'\eps}) - q \sigma M \right\} , \sup_f G_{P'}(f) \right)\\
        \geq & \frac{1}{2} x(\delta) - C(\sigma) \ln d \sqrt{\eps} \sigma M - e^{-\frac{\eps n}{6}}.
    \end{align*}

    If $\eps n \geq o_1 := 6 \ln \frac{8}{x(\delta)}$ and $\sqrt{\eps}M = \frac{b_p}{\sigma}\eps^{\frac{1}{2}-\frac{1}{p}} \leq 1 \wedge \frac{x(\delta)}{8 \sigma C(\sigma)\ln d}$ the previous term can be lower-bounded by $\frac{1}{4}x(\delta)$. We finish observing that for the sum of two distances to be larger than $\frac{1}{4}x(\delta)$ at least one distance must be larger than $c_1(\delta) := \frac{1}{8}x(\delta)$.
\end{proof}

\subsection{Order of Theorem 2.1 of \cite{chernozhukov2016empirical}} \label{sm:order_ga_vc}

The goal of this section is to show that \cite{chernozhukov2016empirical} yields a bound of order at least $\left( \frac{(K_n')^7}{n} \right)^\frac{1}{8}$. We recall the theorem below for convenience. The result from  \cite{chernozhukov2016empirical} was originally stated in terms of a Gaussian coupling, but the authors also provided an anti-concentration lemma to obtain a representation in Kolmogorov distance. Thus, to obtain a comparable result we combined its original form with the anti-concentration lemma the authors provided \cite[Lemma 2.2]{chernozhukov2016empirical}.

\begin{theorem}[Theorem 2.1 of \cite{chernozhukov2016empirical}]\label{thm:ck_ga_vc} Let $\sF$ be a family of functions and $P$ a probability measure. If
\begin{itemize}
    \item $\sF$ is VC-type with envelope $F$ and constants $A \geq 2$, $v \geq 1$,
    \item there exist constants $b \geq \sigma > 0$ and $p \in [4,\infty)$ such that $\nu_k^k \leq \sigma^2 b^{k-2}$ for $k=2,3,4$ and $\| F \|_{L^p(P)} \leq b$,
    \item and $K_n' := v \left( \ln n \vee \ln \frac{Ab}{\sigma} \right)$ satisfies $K_n' \leq n^\frac{1}{3}$,
\end{itemize}
then, for every $\gamma \in (0,1)$,
\[ \varrho \leq \Psi\left( C_1\left\{ \frac{b K_n'}{\gamma^\frac{1}{p} n^{\frac{1}{2}-\frac{1}{p}} } + \frac{(b \sigma^2 (K_n')^2)^\frac{1}{3}}{ \gamma^\frac{1}{3} n^\frac{1}{6} } \right\} \right) + C_2\left( \gamma + \frac{1}{n} \right) \]
where $C_1, C_2$ are positive constants depending only on $p$ and
\[ \Psi(\eta) =  \inf_{\delta, r > 0} \left\{ \frac{2}{\underline{\sigma}_{\sF, P}} \left( \eta + \Xi(\delta) + r \delta \right)\left( \sqrt{2 v  \ln \frac{Ab}{\delta} }+2\right) + e^{-\frac{r^2}{2}} \right\}. \]
\end{theorem}

If $\delta \leq n^\alpha$ for some $\alpha < 0$, then $\Psi(\eta) \geq C\frac{ \sqrt{ K_n' } }{ \underline{\sigma}_{\sF, P} } \eta$ for some constant $C$ depending on $\alpha$. Therefore, the bound given by the previous theorem has order at least
\[ \frac{(K_n')^\frac{3}{2}}{\gamma^\frac{1}{p} n^{\frac{1}{2}-\frac{1}{p}} } + \frac{(K_n')^\frac{7}{6}}{ \gamma^\frac{1}{3} n^\frac{1}{6} } + \gamma. \]
Since $p\geq4$, the previous equation is minimized when the last two terms are equal, yielding at least $\left( \frac{(K_n')^7}{n} \right)^\frac{1}{8}$.

\subsection{Proof of the bootstrap approximation result}\label{sm:proof_bootstrap}

Before starting the proof of our bootstrap approximation result we recall the following theorem from \cite{chernozhuokov2022improved}, which will be used in the proof:

\begin{theorem}[Adapted from Lemma 4.5 and Lemma 4.6 of \cite{chernozhuokov2022improved}]\label{lemma:ck_bootstrap_approximation}
    Make the same assumptions as in Theorem \ref{thm:ck_gaussian_approximation}. Let $\tilde{Z}_n$ be obtained from the Gaussian multiplier bootstrap or the empirical bootstrap. There exists a constant $C$ depending only on $\nu_2(\sF)$ and $\underline{\sigma}_{\sF, P}$ such that
    \begin{enumerate}
        \item if $f(X_1)-Pf$ is sub-exponential with $\psi_1$-Orlicz norm bounded by $B$ for all $f \in \sF$, then with probability at least $1- C \delta_{n,d,B}$,
        \[    \sup_{\lambda \in \R}  \left| \Pr \left( \left. \tilde{Z}_n(\sF) \leq \lambda \,\right| X_{1:n} \right)  - \Pr \left(Z(\sF) \leq \lambda \right)\right| \leq C\delta_{n,d,B};  \]
        \item if $\Ex{\max_{f \in \sF} |f(X_1)-Pf|^p} \leq B^p$ for some $p \in (2,\infty)$, then with probability at least
        
        $1 - C\left(\delta_{n,d,B} \vee \delta_{n,p,d,B}\right)$,
        \[    \sup_{\lambda \in \R}  \left| \Pr \left( \left. \tilde{Z}_n(\sF) \leq \lambda  \,\right| X_{1:n} \right)
      - \Pr \left(Z(\sF) \leq \lambda\right)\right| \leq C \left(\delta_{n,d,B} \vee \delta_{n,p,d,B}\right).  \]
    \end{enumerate}
\end{theorem}

\begin{proof}[Proof of Theorem \ref{thm:bootstrap_approximation}] As in the proof of Theorem \ref{thm:gaussian_approximation}, we assume, without loss of generality, the class $\sF$ is centered. Recall that
\[ \tilde{\T}_{n,k}^\eps(f) := \frac{\sqrt{n}}{n-2k} \sum_{i=k+1}^{n-k} f(\tilde{X}^\eps_{(i)}) - \Tmhat_{n,k}(f, X^\eps_{1:n}),\,\, (f \in \sF), \]
where the points $\tilde{X}^\eps_{i}$ are i.i.d. with law $\frac{1}{n}\sum_{i=1}^n\delta_{X_i^\eps}$. The strategy to prove the empirical bootstrap approximation is to approximate $\tilde{Z}_{n,k}^\eps(\sF)$ by $\tilde{Z}_{n}(\tau_M \circ \sF)$ and then use bootstrap approximation for the empirical mean. We let
\[ M = \nu_p \left( \frac{\lfloor\eps n \rfloor}{11n} \vee \frac{ \lceil \ln(nd) \rceil }{11n} \right)^{-\frac{1}{p}} \quad\text{ and }\quad t = 121 n\frac{\nu_p^p}{M^p} = 11\left( \lfloor \eps n \rfloor \vee \lceil\ln(nd) \rceil\right).  \]

\begin{enumerate}
\item \textit{Counting.} Define
\[ \tilde{V}_M(\sF) = \max_{f\in \sF} \sum_{i=1}^n \mathbf{1}_{\left\{\left|f(\tilde{X}_i)\right| > M \right\}} \]
to be the analogous of $V_M(\sF)$ for the empirical bootstrap. By Lemma \ref{lemma:counting} we know that the event $E = \left\{ V_M(\sF) < \frac{t}{11} \right\}$ satisfies $\Pr(E) \geq 1 - de^{-\frac{t}{11}} \geq 1 - \frac{1}{n}$. From now on we will be conditioning on $E$. Lemma \ref{lemma:counting} yields
\[ \Pr\left( \left. \tilde{V}_M(\sF) \geq t ~\right|~ E \right) \leq de^{-t} \leq \frac{1}{n}.\]

\item \textit{Controlling the contamination.} Let $\tilde{\eps}$ be the proportion of contaminated sample points in $\tilde{X}^\eps_{1:n}$, i.e.,
\[ \tilde{\eps} = \frac{|\{i: \tilde{X}_i^\eps \neq \tilde{X}_i\}| }{n}. \]
Notice that $\tilde{\eps}$ is independent of $X_{1:n}$. By Chernoff's bound for the binomial distribution: 
\[\Pr\left( \Tilde{\eps}n - \eps n \geq t \right) \leq \exp\left\{-\frac{\eps n}{3}\left(\frac{t}{\eps n}\right)^2 \right\} \leq \frac{1}{n}.\]

\item \textit{Bounding.} Now we can use the bounding lemma (Lemma \ref{lem:bounding_simple}) to approximate $\tilde{Z}_{n,k}^\eps(\sF)$ by $\tilde{Z}_{n}(\tau_M \circ \sF)$ with high probability conditioned on $E$. This is done using the lemma twice with $k = \phi n $ and $ \phi = 2\frac{t}{n}$. It yields
\begin{align*}
    \left| \tilde{\T}_{n,k}^\eps(f) - \tilde{\G}_{n}(\tau \circ f)\right| \leq & \sqrt{n} \left| \Tmhat_{n,k}(f, \tilde{X}^\eps_{1:n}) - \Pmhat_n(\tau_M \circ f, \tilde{X}_{1:n}) \right| \\
    & + \sqrt{n} \left| \Tmhat_{n,k}(f, X^\eps_{1:n}) - \Pmhat_n(\tau_M \circ f, X_{1:n}) \right|\\
    \leq & 12 \phi M \sqrt{n}.
\end{align*}

\item \textit{Approximating the empirical approximation.} Lemma \ref{lemma:ck_bootstrap_approximation} (case (i)) gives, with probability at least $1- C_1 \left(  \frac{M^2 \ln^5(nd)}{n} \right)^\frac{1}{4}$,
\[    \sup_{\lambda \in \R}  \left| \Pr \left( \left. \tilde{Z}_n(\tau_M\circ\sF) \leq \lambda \,\right| X_{1:n}\right) - \Pr \left(Z(\tau_M\circ\sF) \leq \lambda\right)\right| \leq C_1\left(  \frac{M^2 \ln^5(nd)}{n} \right)^\frac{1}{4}  \]
for some absolute constant $C_1$ depending only on $\nu_2(\tau_M\circ\sF) \leq \nu_2(\sF)$ and on $\underline{\sigma}_{\tau_M\circ\sF, P}$. The weak variance $\underline{\sigma}_{\tau_M\circ\sF, P}$ can be chosen such that $\underline{\sigma}_{\tau_M\circ\sF, P} \geq \frac{1}{2}\underline{\sigma}_{\sF, P}$, as a consequence of our final choice of $M$, of assumption \eqref{eq:condition_b1}, and of Lemma \ref{lemma:cov_bound}. 

\item \textit{Gaussian approximation and anti-concentration}. Lemma \ref{lemma:gtog} and Lemma \ref{lemma:cov_bound} yield
\[    \sup_{\lambda \in \R}  \left| \Pr \left( \left. \tilde{Z}_n(\tau_M\circ\sF) \leq \lambda \,\right| X_{1:n}\right) - \Pr \left(Z(\sF) \leq \lambda\right) \right| \leq C_1\left(  \frac{M^2 \ln^5(nd)}{n} \right)^\frac{1}{4} + C_2(\ln d)\sqrt{4\nu_{p}^p M^{2-p}} \]
for some $C_2$ that depends only on $\underline{\sigma}_{\sF,P}$. After combining with Nazarov's anti-concentration inequality (Lemma \ref{lemma:nazarov}) we get, with probability at least $1- C_1 \left(  \frac{M^2 \ln^5(nd)}{n} \right)^\frac{1}{4} - \frac{1}{n}$,
\[ \tilde{\varrho} \leq 12\phi M\frac{2 + \sqrt{2 \ln d}}{\underline{\sigma}_{\sF, P}}\sqrt{n} + C_1\left(  \frac{M^2 \ln^5(nd)}{n} \right)^\frac{1}{4} + C_2(\ln d)\sqrt{4\nu_{p}^p M^{2-p}} + \frac{2}{n}. \]

\item \textit{Finish the calculations}. The bound on $\tilde{\varrho}$ is the same, up to a constant, as the bound in $\varrho$ obtained in the proof of Theorem \ref{thm:gaussian_approximation}. Thus, the same bounds can be used. This finishes the proof.
\end{enumerate}
\end{proof}

\subsection{Proof of variance approximation}

\begin{lemma}
\label{lem:bound_variance}
Let $M, t, k$ be as defined in the proof of Theorem \ref{thm:gaussian_approximation}. Under the same assumptions, there exists an absolute constant $c > 0$ such that
\begin{equation}
    \mathbb{P}\left( \sup_{f\in\sF} \left| \widehat{\sigma}^\eps_{n,k}(f) - \sigma_f \right| \geq \frac{c}{\underline{\sigma}_{\sF, P}} \left\{ \nu_{p \wedge 4}^2  \frac{\ln^{1 - \frac{2}{p \wedge 4}}(nd)}{n^{1-\frac{2}{p \wedge 4}}} + \nu_p^2 \eps^{1-\frac{2}{p}} \right\} \right) \leq \frac{1}{n}.
\end{equation}
\end{lemma}

\begin{proof} First notice that $\widehat{\sigma}_{n,k}^\eps(f)$ is invariant by translation, so we assume, without loss of generality, that $\sF$ is centered. From the proof of Theorem \ref{thm:gaussian_approximation} we know that the event $E := \{ V_M(\sF) \leq t \}$ has probability at least $1 - \frac{1}{n}$ when taking
\[ M = \nu_p \left( \frac{\lfloor\eps n \rfloor}{11n} \vee \frac{ \lceil \ln(nd) \rceil }{11n} \right)^{-\frac{1}{p}} \quad\text{ and }\quad t = \lfloor \eps n \rfloor \vee \lceil \ln(nd) \rceil. \]
Recall also that $k = \lfloor \eps n \rfloor + t < \frac{n}{2}$.
If $E$ holds, $V_{2M}\left( \left\{ f - \widehat{T}_{n,k}^\eps(f) : f\in\sF  \right\} \right) < t$,  it follows from Lemma \ref{lem:bounding_simple} that
\begin{align*}
\left| \widehat{T}_{n,k}^\eps\left(  \left(f - \widehat{T}_{n,k}^\eps(f)\right)^2 \right) - \Pmhat_n\left( \left( \tau_{2M} \left(f - \widehat{T}_{n,k}^\eps(f)\right)\right)^2\right) \right| \leq 24 M^2 \frac{k}{n}.
\end{align*}

Now notice that $\tau_{2M}\left(f - \widehat{T}_{n,k}^\eps(f)\right) \neq \tau_M(f) - \widehat{T}_{n,k}^\eps(f)$ only when $|f|>M$. Given event $E$, the number of sample points for which $|f|>M$ is bounded by $t$, thus
\begin{align*}
\left| \Pmhat_n\left( \left( \tau_{2M} \left(f - \widehat{T}_{n,k}^\eps(f)\right)\right)^2\right) - \Pmhat_n\left( \left( \tau_Mf - \widehat{T}_{n,k}^\eps(f) \right)^2\right) \right| \leq \frac{t}{n} 4M^2
\end{align*}

Using Lemma \ref{lem:bounding_simple} can again be used to yield
\begin{align*}
\left| \Pmhat_n\left( \left( \tau_Mf - \widehat{T}_{n,k}^\eps(f) \right)^2\right) - \Pmhat_n\left( \left( \tau_Mf - \Pmhat_n(\tau_Mf) \right)^2\right) \right| \leq 24 M^2 \frac{k}{n} + 36 M^2 \left(\frac{k}{n}\right)^2 \leq 42 M^2 \frac{k}{n}.
\end{align*}

Lemma \ref{lemma:cov_bound} let us control the difference between the variance of $\tau_Mf$ and the one of $f$. Combining all these bounds, we have
\[ \sup_{f\in\sF} \left| \widehat{\sigma}^\eps_{n,k}(f)^2 - \sigma_f^2 \right| \leq \sup_{f\in \sF} \left| \Pmhat_n\left( \left( \tau_Mf - \Pmhat_n(\tau_Mf) \right)^2\right) - \sigma_{\tau_Mf}^2 \right| + 70 M^2 \frac{k}{n} + 4\nu_p^p M^{2-p} \]

To concentrate the empirical variance appearing above we can decompose
\[  \left| \Pmhat_n\left( \left( \tau_Mf - \Pmhat_n(\tau_Mf) \right)^2\right) - \sigma_{\tau_Mf}^2 \right| \leq \left| \Pmhat_n\left( \left( \tau_Mf \right)^2\right) - P(\tau_Mf)^2 \right| + \left| \left(\Pmhat_n\left( \tau_Mf \right) \right)^2 - \left( P\tau_Mf \right)^2 \right|  \]

For the first term, Bernstein's inequality implies that, with probability at least $1 - \frac{1}{2nd}$,
\[ \left| \Pmhat_n\left( \left( \tau_Mf \right)^2 \right) - P(\tau_Mf)^2 \right| \leq  \sqrt{\frac{2 P\left(\tau_Mf\right)^4 \ln(4nd)}{n}} + \frac{2 M^2 \ln(4nd)}{3n}. \]

If $p \in (2,4]$ we can use $1 \leq M \leq \nu_p \left( \frac{\lceil \ln(nd) \rceil}{11n} \right)^{-\frac{1}{p}}$ and $P(\tau_Mf)^4 \leq \nu_p^p M^{4-p}$ to bound, for some absolute constant $c$,
\begin{align*}
    \left| \Pmhat_n\left( \left( \tau_Mf \right)^2 \right) - P(\tau_Mf)^2 \right| & \leq c \nu_p^2  \frac{\ln^{1 - \frac{2}{p}}(nd)}{n^{1-\frac{2}{p}}} + \frac{2 M^2 \ln(4nd)}{3n}.
\end{align*}
Meanwhile, if $p>4$ we just bound $P(\tau_Mf)^4 \leq \nu_4^4$. Altogether,
\begin{align*}
    \left| \Pmhat_n\left( \left( \tau_Mf \right)^2 \right) - P(\tau_Mf)^2 \right| & \leq c \nu_{p \wedge 4}^2  \frac{\ln^{1 - \frac{2}{p \wedge 4}}(nd)}{n^{1-\frac{2}{p \wedge 4}}} + \frac{2 M^2 \ln(4nd)}{3n}.
\end{align*}

For the second term, we use $\left| a^2 - b^2 \right| = \left| a - b \right| \left| a + b \right| \leq \left| a - b \right| \left( \left| a - b \right| + 2\left| b \right| \right)$ to bound
\[  \left| \left(\Pmhat_n\left( \tau_Mf \right) \right)^2 - \left( P\tau_Mf \right)^2 \right| \leq \left| \Pmhat_n(\tau_Mf) - P(\tau_Mf) \right| \left( \left| \Pmhat_n(\tau_Mf) - P(\tau_Mf) \right| + 2\left| P(\tau_Mf) \right| \right). \]
Since we assumed $f$ is centered, we have $\left| P(\tau_Mf) \right| \leq \sigma_{\tau_Mf} \leq \sigma_f$. Similarly, for some absolute constant $c>0$, Bernstein's inequality yields, with probability at least $1 - \frac{1}{2nd}$,
\[ \left| \Pmhat_n(\tau_Mf) - P(\tau_Mf) \right| \leq \sqrt{\frac{2 \sigma_f^2 \ln(4nd)}{n}} + \frac{2 M \ln(4nd)}{3n}. \]

Thus, the first term is the dominating one. We can bound
\[ \left| \Pmhat_n\left( \left( \tau_Mf - \Pmhat_n(\tau_Mf) \right)^2\right) - \sigma_{\tau_Mf}^2 \right| \leq c \nu_{p \wedge 4}^2  \frac{\ln^{1 - \frac{2}{p \wedge 4}}(nd)}{n^{1-\frac{2}{p \wedge 4}}} + \frac{2 M^2 \ln(4nd)}{3n} \]
and a union bound yields, with probability at least $1-\frac{1}{n}$
\[ \sup_{f\in\sF} \left| \widehat{\sigma}^\eps_{n,k}(f)^2 - \sigma_f^2 \right| \leq c \nu_{p \wedge 4}^2  \frac{\ln^{1 - \frac{2}{p \wedge 4}}(nd)}{n^{1-\frac{2}{p \wedge 4}}} + \frac{2 M^2 \ln(4nd)}{3n} + 70 M^2 \frac{k}{n} + 4\nu_p^p M^{2-p}. \]

Since $M$ has order $\nu_p \left( \frac{k}{n} \right)^{-\frac{1}{p}}$ we bound, possibly updating the constant,
\[ \sup_{f\in\sF} \left| \widehat{\sigma}^\eps_{n,k}(f)^2 - \sigma_f^2 \right| \leq c \left\{ \nu_{p \wedge 4}^2  \frac{\ln^{1 - \frac{2}{p \wedge 4}}(nd)}{n^{1-\frac{2}{p \wedge 4}}} + \nu_p^2 \left( \frac{k}{n} \right)^{1-\frac{2}{p}} \right\}. \]

Considering the case $k$ is of order $\eps n$ and the case where $k$ is of order $\ln(nd)$ yields the final bound
\[ \sup_{f\in\sF} \left| \widehat{\sigma}^\eps_{n,k}(f)^2 - \sigma_f^2 \right| \leq c \left\{ \nu_{p \wedge 4}^2  \frac{\ln^{1 - \frac{2}{p \wedge 4}}(nd)}{n^{1-\frac{2}{p \wedge 4}}} + \nu_p^2 \eps^{1-\frac{2}{p}} \right\}. \]
The proof follows from the identity $| \widehat{\sigma}_{n,k}^\eps(f) - \sigma_f | \leq \underline{\sigma}_{\sF, P}^{-1} | \widehat{\sigma}_{n,k}^\eps(f)^2 - \sigma_f^2 |$.

\end{proof}

\bibliographystyle{unsrt}
\bibliography{bibliography}

@article{belloni2018high,
  title={High-dimensional econometrics and regularized GMM},
  author={Belloni, Alexandre and Chernozhukov, Victor and Chetverikov, Denis and Hansen, Christian and Kato, Kengo},
  journal={arXiv preprint arXiv:1806.01888},
  year={2018}
}

@article{kock2025high,
  title={High-dimensional Gaussian and bootstrap approximations for robust means},
  author={Kock, Anders Bredahl and Preinerstorfer, David},
  journal={arXiv preprint arXiv:2504.08435},
  year={2025}
}

@article{zych2017regenotyper,
  title={reGenotyper: Detecting mislabeled samples in genetic data},
  author={Zych, Konrad and Snoek, Basten L and Elvin, Mark and Rodriguez, Miriam and Van der Velde, K Joeri and Arends, Danny and Westra, Harm-Jan and Swertz, Morris A and Poulin, Gino and Kammenga, Jan E and others},
  journal={PLoS One},
  volume={12},
  number={2},
  pages={e0171324},
  year={2017},
  publisher={Public Library of Science San Francisco, CA USA}
}

@article{northcutt2021pervasive,
  title={Pervasive label errors in test sets destabilize machine learning benchmarks},
  author={Northcutt, Curtis G and Athalye, Anish and Mueller, Jonas},
  journal={arXiv preprint arXiv:2103.14749},
  year={2021}
}

@article{fagnani2014distributed,
  title={A distributed classification/estimation algorithm for sensor networks},
  author={Fagnani, Fabio and Fosson, Sophie M and Ravazzi, Chiara},
  journal={SIAM Journal on Control and Optimization},
  volume={52},
  number={1},
  pages={189--218},
  year={2014},
  publisher={SIAM}
}

@book{diakonikolas2023algorithmic,
  title={Algorithmic high-dimensional robust statistics},
  author={Diakonikolas, Ilias and Kane, Daniel M},
  year={2023},
  publisher={Cambridge university press}
}

@phdthesis{phd,
  title={Robustness via trimmed sample means},
  author={Resende, Lucas},
  year={2024},
  month={4},
  school={PhD thesis, Instituto de Matem{\'a}tica Pura e Aplicada (IMPA)}
}

@book{zhang2023mathematical,
  title={Mathematical analysis of machine learning algorithms},
  author={Zhang, Tong},
  year={2023},
  publisher={Cambridge University Press}
}

@article{lopes2020bootstrapping,
  title={Bootstrapping max statistics in high dimensions: Near-parametric rates under weak variance decay and application to functional and multinomial data},
  author={Lopes, Miles E and Lin, Zhenhua and M{\"u}ller, Hans-Georg},
  journal={The Annals of Statistics},
  volume={48},
  number={2},
  pages={1214--1243},
  year={2020},
  publisher={Institute of Mathematical Statistics}
}

@article{kock2024remark,
  title={A remark on moment-dependent phase transitions in high-dimensional Gaussian approximations},
  author={Kock, Anders Bredahl and Preinerstorfer, David},
  journal={Statistics \& Probability Letters},
  volume={211},
  pages={110149},
  year={2024},
  publisher={Elsevier}
}

@article{chernozhukov2023high,
  title={High-dimensional data bootstrap},
  author={Chernozhukov, Victor and Chetverikov, Denis and Kato, Kengo and Koike, Yuta},
  journal={Annual Review of Statistics and Its Application},
  volume={10},
  number={1},
  pages={427--449},
  year={2023},
  publisher={Annual Reviews}
}

@article{chernozhukov2013gaussian,
  title={Gaussian approximations and multiplier bootstrap for maxima of sums of high-dimensional random vectors},
  author={Chernozhukov, Victor and Chetverikov, Denis and Kato, Kengo},
  journal={The Annals of Statistics, December},
  volume={41},
  number={6},
  pages={2786--2819},
  year={2013}
}

@inproceedings{nagaev1976estimate,
  title={An estimate of the remainder term in the multidimensional central limit theorem},
  author={Nagaev, SV},
  booktitle={Proceedings of the Third Japan—USSR Symposium on Probability Theory},
  pages={419--438},
  year={1976},
  organization={Springer Berlin Heidelberg}
}

@article{chernozhukov2014anti,
    title={Anti-concentration and honest, adaptive confidence bands},
    author={Chernozhukov, Victor and Chetverikov, Denis and Kato, Kengo},
    ISSN = {00905364},
    journal = {The Annals of Statistics},
    number = {5},
    pages = {1787--1818},
    publisher = {Institute of Mathematical Statistics},
    volume = {42},
    year = {2014}
}

@article{giessing2023gaussian,
  title={Gaussian and Bootstrap Approximations for Suprema of Empirical Processes},
  author={Giessing, Alexander},
  journal={arXiv preprint arXiv:2309.01307},
  year={2023}
}

@article{bentkus2003dependence,
  title={On the dependence of the Berry--Esseen bound on dimension},
  author={Bentkus, Vidmantas},
  journal={Journal of Statistical Planning and Inference},
  volume={113},
  number={2},
  pages={385--402},
  year={2003},
  publisher={Elsevier}
}

@article{bentkus2005lyapunov,
  title={A Lyapunov-type bound in Rd},
  author={Bentkus, Vidmantas},
  journal={Theory of Probability \& Its Applications},
  volume={49},
  number={2},
  pages={311--323},
  year={2005},
  publisher={SIAM}
}

@article{koike2021notes,
  title={Notes on the dimension dependence in high-dimensional central limit theorems for hyperrectangles},
  author={Koike, Yuta},
  journal={Japanese Journal of Statistics and Data Science},
  volume={4},
  pages={257--297},
  year={2021},
  publisher={Springer}
}

@article{chernozhuokov2022improved,
  title={Improved central limit theorem and bootstrap approximations in high dimensions},
  author={Chernozhuokov, Victor and Chetverikov, Denis and Kato, Kengo and Koike, Yuta},
  journal={The Annals of Statistics},
  volume={50},
  number={5},
  pages={2562--2586},
  year={2022},
  publisher={Institute of Mathematical Statistics}
}

@article{chernozhukov2016empirical,
  title={Empirical and multiplier bootstraps for suprema of empirical processes of increasing complexity, and related Gaussian couplings},
  author={Chernozhukov, Victor and Chetverikov, Denis and Kato, Kengo},
  journal={Stochastic Processes and their Applications},
  volume={126},
  number={12},
  pages={3632--3651},
  year={2016},
  publisher={Elsevier}
}

@article{lecue2019learning,
  title={Learning from MOM’s principles: Le Cam’s approach},
  author={Lecu{\'e}, Guillaume and Lerasle, Matthieu},
  journal={Stochastic Processes and their applications},
  volume={129},
  number={11},
  pages={4385--4410},
  year={2019},
  publisher={Elsevier}
}

@Article{Joly2017,
  author    = {Emilien Joly and G{\'{a}}bor Lugosi and Roberto Imbuzeiro Oliveira},
  journal   = {Electronic Journal of Statistics},
  title     = {On the estimation of the mean of a random vector},
  year      = {2017},
  month     = {jan},
  number    = {1},
  volume    = {11},
  doi       = {10.1214/17-ejs1228},
  publisher = {Institute of Mathematical Statistics},
}

@Article{Devroye2016,
  author    = {Luc Devroye and Matthieu Lerasle and Gabor Lugosi and Roberto I. Oliveira},
  journal   = {The Annals of Statistics},
  title     = {Sub-Gaussian mean estimators},
  year      = {2016},
  month     = {dec},
  number    = {6},
  volume    = {44},
  doi       = {10.1214/16-aos1440},
  publisher = {Institute of Mathematical Statistics},
}

@Article{Lugosi2019,
  author    = {G{\'{a}}bor Lugosi and Shahar Mendelson},
  journal   = {Probability Theory and Related Fields},
  title     = {Near-optimal mean estimators with respect to general norms},
  year      = {2019},
  month     = {mar},
  number    = {3-4},
  pages     = {957--973},
  volume    = {175},
  doi       = {https://doi.org/10.1007/s00440-019-00906-4},
  publisher = {Springer Science and Business Media {LLC}},
}

@Article{Minsker2015,
  author    = {Stanislav Minsker},
  journal   = {Bernoulli},
  title     = {Geometric median and robust estimation in Banach spaces},
  year      = {2015},
  month     = {nov},
  number    = {4},
  volume    = {21},
  doi       = {https://doi.org/10.3150/14-BEJ645},
  publisher = {Bernoulli Society for Mathematical Statistics and Probability},
}

@Article{Lugosi2019a,
  author    = {G{\'{a}}bor Lugosi and Shahar Mendelson},
  journal   = {The Annals of Statistics},
  title     = {Sub-Gaussian estimators of the mean of a random vector},
  year      = {2019},
  month     = {apr},
  number    = {2},
  volume    = {47},
  doi       = {10.1214/17-AOS1639},
  publisher = {Institute of Mathematical Statistics},
}

@article{minsker2018uniform,
  title={Uniform bounds for robust mean estimators},
  author={Minsker, Stanislav},
  journal={Stochastic Processes and their Applications},
  pages={104724},
  year={2025},
  publisher={Elsevier}
}

@article{huber1981robust,
  title={Robust statistics john wiley \& sons},
  author={Huber, Peter J and Ronchetti, Elvezio M},
  journal={New York},
  volume={1},
  number={1},
  year={1981}
}

@Article{Catoni2012,
  author    = {Olivier Catoni},
  journal   = {Annales de l{\textquotesingle}Institut Henri Poincar{\'{e}}, Probabilit{\'{e}}s et Statistiques},
  title     = {Challenging the empirical mean and empirical variance: A deviation study},
  year      = {2012},
  month     = {nov},
  number    = {4},
  volume    = {48},
  doi       = {10.1214/11-AIHP454},
  publisher = {Institute of Mathematical Statistics},
}

@Article{Chernozhukov2014,
  author    = {Victor Chernozhukov and Denis Chetverikov and Kengo Kato},
  journal   = {The Annals of Statistics},
  title     = {{Gaussian approximation of suprema of empirical processes}},
  year      = {2014},
  number    = {4},
  pages     = {1564 -- 1597},
  volume    = {42},
  doi       = {10.1214/14-AOS1230},
  keywords  = {coupling, empirical process, Gaussian approximation, Kernel estimation, local empirical process, series estimation, supremum},
  publisher = {Institute of Mathematical Statistics},
  url       = {https://doi.org/10.1214/14-AOS1230},
}

@misc{rico2022a,
  doi = {10.48550/ARXIV.2209.13485},
  
  url = {https://arxiv.org/abs/2209.13485},
  
  author = {Oliveira, Roberto I. and Rico, Zoraida F.},
  
  keywords = {Statistics Theory (math.ST), Probability (math.PR), FOS: Mathematics, FOS: Mathematics},
  
  title = {Improved covariance estimation: optimal robustness and sub-Gaussian guarantees under heavy tails},
  
  publisher = {arXiv},
  
  year = {2022},
  
  copyright = {arXiv.org perpetual, non-exclusive license}
}

@Article{Lecue2020,
  author    = {Guillaume Lecu{\'{e}} and Matthieu Lerasle},
  journal   = {The Annals of Statistics},
  title     = {Robust machine learning by median-of-means: Theory and practice},
  year      = {2020},
  month     = {apr},
  number    = {2},
  volume    = {48},
  doi       = {10.1214/19-AOS1828},
  publisher = {Institute of Mathematical Statistics},
}

@article{Depersin2021,
  title={Optimal robust mean and location estimation via convex programs with respect to any pseudo-norms},
  author={Depersin, Jules and Lecu{\'e}, Guillaume},
  journal={Probability Theory and Related Fields},
  volume={183},
  number={3},
  pages={997--1025},
  year={2022},
  publisher={Springer}
}

@Article{Lugosi2021,
  author    = {G{\'{a}}bor Lugosi and Shahar Mendelson},
  journal   = {The Annals of Statistics},
  title     = {Robust multivariate mean estimation: The optimality of trimmed mean},
  year      = {2021},
  month     = {feb},
  number    = {1},
  volume    = {49},
  doi       = {10.1214/20-aos1961},
  publisher = {Institute of Mathematical Statistics},
}

@Article{Diakonikolas2019,
  author      = {Ilias Diakonikolas and Daniel M. Kane},
  title       = {Recent Advances in Algorithmic High-Dimensional Robust Statistics},
  abstract    = {Learning in the presence of outliers is a fundamental problem in statistics. Until recently, all known efficient unsupervised learning algorithms were very sensitive to outliers in high dimensions. In particular, even for the task of robust mean estimation under natural distributional assumptions, no efficient algorithm was known. Recent work in theoretical computer science gave the first efficient robust estimators for a number of fundamental statistical tasks, including mean and covariance estimation. Since then, there has been a flurry of research activity on algorithmic high-dimensional robust estimation in a range of settings. In this survey article, we introduce the core ideas and algorithmic techniques in the emerging area of algorithmic high-dimensional robust statistics with a focus on robust mean estimation. We also provide an overview of the approaches that have led to computationally efficient robust estimators for a range of broader statistical tasks and discuss new directions and opportunities for future work.},
  date        = {2019-11-14},
  eprint      = {1911.05911},
  eprintclass = {cs.DS},
  eprinttype  = {arXiv},
  file        = {:http\://arxiv.org/pdf/1911.05911v1:PDF},
  keywords    = {cs.DS, cs.CC, math.ST, stat.ML, stat.TH},
}

@article{chernozhukov2017detailed,
  title={Detailed proof of Nazarov's inequality},
  author={Chernozhukov, Victor and Chetverikov, Denis and Kato, Kengo},
  journal={arXiv preprint arXiv:1711.10696},
  year={2017}
}

@inproceedings{nazarov2003maximal,
  title={On the Maximal Perimeter of a Convex Set in $\mathbb{R}^n$ with Respect to a Gaussian Measure},
  author={Nazarov, Fedor},
  booktitle={Geometric Aspects of Functional Analysis: Israel Seminar 2001-2002},
  pages={169--187},
  year={2003},
  organization={Springer}
}

@article{OliveiraResende,
  title={Trimmed sample means for robust uniform mean estimation and regression},
  author={Oliveira, Roberto I and Resende, Lucas},
  journal={The Annals of Statistics},
  volume={53},
  number={5},
  pages={2153--2178},
  year={2025},
  publisher={Institute of Mathematical Statistics}
}

@inproceedings{depersin2024robust,
  title={Robust subgaussian estimation with VC-dimension},
  author={Depersin, Jules},
  booktitle={Annales de l'Institut Henri Poincare (B) Probabilites et statistiques},
  volume={60},
  number={2},
  pages={971--989},
  year={2024},
  organization={Institut Henri Poincar{\'e}}
}

@article{chernozhukov2023nearly,
  title={Nearly optimal central limit theorem and bootstrap approximations in high dimensions},
  author={Chernozhukov, Victor and Chetverikov, Denis and Koike, Yuta},
  journal={The Annals of Applied Probability},
  volume={33},
  number={3},
  pages={2374--2425},
  year={2023},
  publisher={Institute of Mathematical Statistics}
}

@Book{Vaart1996,
  author    = {Aad W. van der Vaart and Jon A. Wellner},
  publisher = {Springer New York},
  title     = {Weak Convergence and Empirical Processes},
  year      = {1996},
  doi       = {10.1007/978-1-4757-2545-2},
}

@book{talagrand2014upper,
  title={Upper and lower bounds for stochastic processes},
  author={Talagrand, Michel},
  volume={60},
  year={2014},
  publisher={Springer}
}

@article{huber1965robust,
  title={A robust version of the probability ratio test},
  author={Huber, Peter J},
  journal={The Annals of Mathematical Statistics},
  pages={1753--1758},
  year={1965},
  publisher={JSTOR}
}

@article{deng2020beyond,
  title={Beyond gaussian approximation},
  author={Deng, Hang and Zhang, Cun-Hui},
  journal={The Annals of Statistics},
  volume={48},
  number={6},
  pages={3643--3671},
  year={2020},
  publisher={JSTOR}
}

@InProceedings{diakonikolas2019a,
  title = 	 {Sever: A Robust Meta-Algorithm for Stochastic Optimization},
  author =       {Diakonikolas, Ilias and Kamath, Gautam and Kane, Daniel and Li, Jerry and Steinhardt, Jacob and Stewart, Alistair},
  booktitle = 	 {Proceedings of the 36th International Conference on Machine Learning},
  pages = 	 {1596--1606},
  year = 	 {2019},
  editor = 	 {Chaudhuri, Kamalika and Salakhutdinov, Ruslan},
  volume = 	 {97},
  series = 	 {Proceedings of Machine Learning Research},
  month = 	 {09--15 Jun},
  publisher =    {PMLR},
  pdf = 	 {http://proceedings.mlr.press/v97/diakonikolas19a/diakonikolas19a.pdf},
  url = 	 {https://proceedings.mlr.press/v97/diakonikolas19a.html},
  abstract = 	 {In high dimensions, most machine learning methods are brittle to even a small fraction of structured outliers. To address this, we introduce a new meta-algorithm that can take in a base learner such as least squares or stochastic gradient descent, and harden the learner to be resistant to outliers. Our method, Sever, possesses strong theoretical guarantees yet is also highly scalable – beyond running the base learner itself, it only requires computing the top singular vector of a certain n{\texttimes}d matrix. We apply Sever on a drug design dataset and a spam classification dataset, and find that in both cases it has substantially greater robustness than several baselines. On the spam dataset, with 1% corruptions, we achieved 7.4% test error, compared to 13.4%-20.5% for the baselines, and 3% error on the uncorrupted dataset. Similarly, on the drug design dataset, with 10% corruptions, we achieved 1.42 mean-squared error test error, compared to 1.51-2.33 for the baselines, and 1.23 error on the uncorrupted dataset.}
}

@article{kuchibhotla2020high,
  title={High-dimensional CLT for Sums of Non-degenerate Random Vectors: $n^{-\frac{1}{2}}$-rate},
  author={Kuchibhotla, Arun Kumar and Rinaldo, Alessandro},
  journal={arXiv preprint arXiv:2009.13673},
  year={2020}
}

@article{liu2024robust,
  title={Robust Max Statistics for High-Dimensional Inference},
  author={Liu, Mingshuo and Lopes, Miles E},
  journal={arXiv preprint arXiv:2409.16683},
  year={2024}
}

@book{prokhorov2000limit,
  title={Limit theorems of probability theory},
  author={Prokhorov, IUV and Statulevicius, V},
  volume={6},
  year={2000},
  publisher={Springer Science \& Business Media}
}

@article{klartag2012variations,
  title={Variations on the Berry--Esseen Theorem},
  author={Klartag, B and Sodin, S},
  journal={Theory of Probability \& Its Applications},
  volume={56},
  number={3},
  pages={403--419},
  year={2012},
  publisher={SIAM}
}

\end{document}